\documentclass{amsart}
\usepackage{amssymb,amsmath,amsthm,amsfonts}
\usepackage{graphicx}
\usepackage{tikz}
\usetikzlibrary{arrows.meta,positioning}
\usepackage{environ}
\usepackage{float}
\usepackage{xcolor}    

\usepackage{enumitem} 
\def\lf{\left}
\def\r{\right}
\def\ls{\lesssim}

\setcounter{tocdepth}{1}

\newcommand{\field}[1]{\mathbb{#1}}
\newcommand{\N}{\field{N}}                      
\newcommand{\Z}{\field{Z}}                      
\newcommand{\R}{\field{R}}                      

\NewEnviron{bigequation}{
    \begin{equation}
    \scalebox{1.5}{$\BODY$}
    \end{equation}
    }

\newcommand{\de}{\delta}
\newcommand{\la}{\lambda}
\newcommand{\eps}{\varepsilon}

\newcommand{\loc}{{\scriptstyle{loc}}}
\newcommand{\ovdimB}{{\overline{\dim_B}}}

\def\Barint_#1{\mathchoice
          {\mathop{\vrule width 6pt height 3 pt depth -2.5pt
                  \kern -8pt \intop}\nolimits_{#1}}%
          {\mathop{\vrule width 5pt height 3 pt depth -2.6pt
                  \kern -6pt \intop}\nolimits_{#1}}%
          {\mathop{\vrule width 5pt height 3 pt depth -2.6pt
                  \kern -6pt \intop}\nolimits_{#1}}%
          {\mathop{\vrule width 5pt height 3 pt depth -2.6pt
                  \kern -6pt \intop}\nolimits_{#1}}}

\makeatletter
\newcommand*{\mint}[1]{%
  \mint@l{#1}{}%
}
\newcommand*{\mint@l}[2]{%
  \@ifnextchar\limits{%
    \mint@l{#1}%
  }{%
    \@ifnextchar\nolimits{%
      \mint@l{#1}%
    }{%
      \@ifnextchar\displaylimits{%
        \mint@l{#1}%
      }{%
        \mint@s{#2}{#1}%
      }%
    }%
  }%
}
\newcommand*{\mint@s}[2]{%
  \@ifnextchar_{%
    \mint@sub{#1}{#2}%
  }{%
    \@ifnextchar^{%
      \mint@sup{#1}{#2}%
    }{%
      \mint@{#1}{#2}{}{}%
    }%
  }%
}
\def\mint@sub#1#2_#3{%
  \@ifnextchar^{%
    \mint@sub@sup{#1}{#2}{#3}%
  }{%
    \mint@{#1}{#2}{#3}{}%
  }%
}
\def\mint@sup#1#2^#3{%
  \@ifnextchar_{%
    \mint@sup@sub{#1}{#2}{#3}%
  }{%
    \mint@{#1}{#2}{}{#3}%
  }%
}
\def\mint@sub@sup#1#2#3^#4{%
  \mint@{#1}{#2}{#3}{#4}%
}
\def\mint@sup@sub#1#2#3_#4{%
  \mint@{#1}{#2}{#4}{#3}%
}
\newcommand*{\mint@}[4]{%
  \mathop{}%
  \mkern-\thinmuskip
  \mathchoice{%
    \mint@@{#1}{#2}{#3}{#4}%
        \displaystyle\textstyle\scriptstyle
  }{%
    \mint@@{#1}{#2}{#3}{#4}%
        \textstyle\scriptstyle\scriptstyle
  }{%
    \mint@@{#1}{#2}{#3}{#4}%
        \scriptstyle\scriptscriptstyle\scriptscriptstyle
  }{%
    \mint@@{#1}{#2}{#3}{#4}%
        \scriptscriptstyle\scriptscriptstyle\scriptscriptstyle
  }%
  \mkern-\thinmuskip
  \int#1%
  \ifx\\#3\\\else_{#3}\fi
  \ifx\\#4\\\else^{#4}\fi  
}
\newcommand*{\mint@@}[7]{%
  \begingroup
    \sbox0{$#5\int\m@th$}%
    \sbox2{$#5\int_{}\m@th$}%
    \dimen2=\wd0 %
    \let\mint@limits=#1\relax
    \ifx\mint@limits\relax
      \sbox4{$#5\int_{\kern1sp}^{\kern1sp}\m@th$}%
      \ifdim\wd4>\wd2 %
        \let\mint@limits=\nolimits
      \else
        \let\mint@limits=\limits
      \fi
    \fi
    \ifx\mint@limits\displaylimits
      \ifx#5\displaystyle
        \let\mint@limits=\limits
      \fi
    \fi
    \ifx\mint@limits\limits
      \sbox0{$#7#3\m@th$}%
      \sbox2{$#7#4\m@th$}%
      \ifdim\wd0>\dimen2 %
        \dimen2=\wd0 %
      \fi
      \ifdim\wd2>\dimen2 %
        \dimen2=\wd2 %
      \fi
    \fi
    \rlap{%
      $#5%
        \vcenter{%
          \hbox to\dimen2{%
            \hss
            $#6{#2}\m@th$%
            \hss
          }%
        }%
      $%
    }%
  \endgroup
}

\theoremstyle{definition}
\newtheorem{theorem}{Theorem}
\newtheorem{theoremA}{Theorem}
\newtheorem{corollary}[theorem]{Corollary}
\newtheorem{lemma}[theorem]{Lemma}

\newtheorem{proposition}[theorem]{Proposition}

\theoremstyle{definition}
\newtheorem{definition}[theorem]{Definition}

\newtheorem{remark}[theorem]{Remark}

\numberwithin{theorem}{section} \numberwithin{equation}{section}

\newenvironment{claimproof}{%
  \begin{proof}
}{\end{proof}}

\title[Dimension distortion under fractionally smooth mappings]{On the dimension distortion under fractionally smooth mappings}
\begin{document}
\author[Ryan Alvarado]{Ryan Alvarado}
\address{Department of Mathematics \\ Amherst College, Amherst, MA}
\email{rjalvarado@amherst.edu}
\author[Efstathios-K. Chrontsios-Garitsis]{Efstathios-K. Chrontsios-Garitsis}
\address{Department of Mathematics \\ University of Tennessee, Knoxville, TN}
\email{echronts@utk.edu, echronts@gmail.com}
\subjclass[2020]{Primary 30L10; Secondary 31E05, 28A80}

\begin{abstract}
We determine the extent to which certain classes of fractionally  `smooth' continuous mappings between metric spaces distort various dimensions, including the Hausdorff, upper Minkowski (box-counting), and upper intermediate dimensions. Our intermediate and Minkowski dimension distortion results are new even for continuous (fractional) Sobolev and, more generally, Triebel--Lizorkin and Besov mappings
between Euclidean spaces, complementing 
the work of Hencl-Honz\'ik \cite{SobSubcriticalDimDist2, OLDERdimHfractionalSobolevTL} and Huynh \cite{Chi_thesis}. 
Moreover, our results also extend the aforementioned work, as well as the work of Kaufman \cite{Kaufman} and Fraser-Tyson \cite{FraTyson_intermed_dim}, to certain weighted Euclidean spaces and, more generally, to doubling metric measure spaces. As an application of our main result, we quantify the corresponding dimension distortion properties of quasisymmetric mappings for non-Ahlfors regular subsets of metric measure spaces, strengthening a result of Bishop-Hakobyan-Williams \cite{BishHakWill:QSdistortion}.
\end{abstract}

\maketitle
\section{Introduction}
The class of Sobolev mappings has been famously an essential tool in the area of partial differential equations (PDEs) 
\cite{EvansPDE}. On the other hand, fractionally smooth Sobolev spaces provide a natural framework for problems where smoothness is intermediate between integer orders, acting as interpolation spaces in the context of functional analysis  \cite{Fract_Sob_book}. They arise naturally in the study of the fractional Laplacian, with applications on anomalous diffusion and jump processes, minimal surfaces, elliptic problems with measure data, and many other areas. We refer to \cite{Fract_Sob_applications} for an even more extensive list of applications and relevant references.  In the past two and a half decades, there has been an increasing interest and need to extend this theory of (fractional) Sobolev and, more generally, Triebel--Lizorkin and Besov mappings to metric spaces. Applications of this endeavor include the development of the theory of  PDEs \cite{KigamiAnFr}, calculus of variations \cite{AmbrosioBV} and optimal transportation \cite{AmbrosioGradFlow} on the non-smooth setting of fractal spaces. The theory of  (fractional) Sobolev, Triebel--Lizorkin, and Besov mappings defined between metric spaces has been developed by many authors (see, for instance, \cite{CheegerSob,HajSob,HajKoskSob,HK00,KorSchSob,NagesNewtonianSob,Y03,KYZ11,hhhpl21,WHYH21,GKZ13}
for a non-exhaustive list), who have used different approaches to adjust the theory to different settings. 

A question of broad interest has been to determine in what ways certain classes of mappings distort  dimension notions. One of the earliest results in this direction is by Gehring-V\"ais\"al\"a \cite{GehringVais}, who gave quantitative bounds on how quasiconformal mappings, a special class of super-critical Sobolev mappings, change the Hausdorff dimension of a subset of $\R^n$. Kaufman later proved  bounds for the distortion of the Hausdorff and Minkowski dimensions under general super-critical Sobolev mappings \cite{Kaufman}. The study of dimension distortion has since  been extended to sub-critical Sobolev mappings \cite{SobSubcriticalDimDist1}, fractionally smooth Sobolev mappings \cite{SobSubcriticalDimDist2,OLDERdimHfractionalSobolevTL, Chi_thesis}, to other dimension notions \cite{OurQCspec,SobSubcritDimDist0, HolomSpecChron, FraTyson_intermed_dim}, and to other settings, such as distortion by Sobolev and quasisymmetric mappings defined between metric spaces  \cite{btw:heisenberg}, \cite{bmt:grassmannian}, \cite{BaloghAGMS}, \cite{BishHakWill:QSdistortion}. Moreover,  employing dimension distortion results has yielded applications to the H\"older and quasiconformal classification problems \cite{Chron_conc_shells, OurQCspec, CG_Vellis_spirals}, which are often challenging tasks and require the use of heavy machinery. However, in the non-Euclidean setting all results for Sobolev mappings are for the Hausdorff dimension, or cases where all dimensions coincide, with the distortion of the Minkowski dimension only recently settled by the second author \cite{Chron_comp_holder_Minkowski}. Furthermore, there are currently no results on the dimension distortion under fractional Sobolev, Triebel--Lizorkin, or Besov mappings in the metric setting. In this manuscript, we address both of these open directions by considering the dimension distortion properties of \textit{compactly-H\"older mappings}, a class that contains and unifies the notions of  (fractional) Sobolev, Triebel--Lizorkin, and Besov mappings between metric spaces (see Section~\ref{sec:background}).

A modern approach to notions of fractal dimensions has been the introduction of dimension functions, which provide more information on the finer structure of spaces than typical dimension values. For instance, the Assouad spectrum, introduced by Fraser-Yu \cite{fy:assouad-spectrum}, is such a dimension function that naturally interpolates between the upper Minkowski and Assouad dimensions, based on the geometric properties of the space. We refer to \cite{FraserBook} for an exposition on the topic, with a plethora of applications in areas such as number theory, probability, and functional analysis.  A similar dimension function is the collection of \textit{(upper) intermediate dimensions}, introduced by Falconer-Fraser-Kempton \cite{Intermediate_dim_introduction}. This notion  interpolates between the Hausdorff and upper Minkowski dimensions, capturing finer geometric traits of the space not typically distinguished by the two extreme dimension values. Recent applications include towards the dimension theory of Brownian images \cite{Falc_Brown_applic_interm}, bi-Lipschitz classification of spaces \cite{banaji_Advanc_Lipsch}, and the theory of orthogonal projections \cite{Fra_proj_interm}. For a uniformly perfect metric space $X$ (see Section~\ref{sec:background}), the properties of $\theta$-intermediate dimensions of non-empty subsets $E$ of $X$, denoted by $\dim_\theta E$, were first established and studied by Banaji in \cite{Banaji_Int_dim_metric}. In the same metric context, we establish intermediate dimension distortion bounds for compactly H\"older mappings, in the spirit of Gehring-V\"ais\"al\"a \cite{GehringVais} and Kaufman \cite{Kaufman}:

\begin{theorem}\label{thm:main_Holder}
    Suppose $(X,d_X)$ is a doubling, uniformly perfect metric space and $(Y,d_Y)$ is a uniformly perfect metric space. For $\theta\in(0,1)$, $p\in(0,\infty)$ and $\alpha\in(0,\infty)$, if $f:X\rightarrow Y$ is $(p,\alpha)$-compactly H\"older and $E\subset X$ is bounded with $\dim_\theta E =d_E(\theta)$, then
		\begin{equation}\label{eq:CHBoxdistortion}
			\dim_\theta f(E) \leq \max\left\{ \frac{pd_E(\theta)}{\alpha p+d_E(\theta)}, d_E(\theta)\right\}.
		\end{equation}
\end{theorem}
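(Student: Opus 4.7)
The plan is to carry over Kaufman's H\"older-inequality technique for Sobolev maps to the scale-restricted cover formulation of $\dim_\theta$ in uniformly perfect metric spaces (due to Banaji), treating the two quantities inside the maximum separately. Fix $\theta\in(0,1]$, let $t>d_E(\theta)$ be arbitrary and set $s:=\tfrac{pt}{\alpha p+t}$. By the definition of $\dim_\theta$, for each small $\delta>0$ there is a cover $\{U_i\}_{i\in I_\delta}$ of $E$ whose diameters $r_i:=\diam U_i$ satisfy $r_i\in[\delta^{1/\theta},\delta]$ and $\sum_i r_i^t\leq\varepsilon(\delta)$ with $\varepsilon(\delta)\to 0$. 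The doubling property of $X$ lets me replace each $U_i$ by a ball of comparable diameter and bounded overlap.

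Feeding these balls into the $(p,\alpha)$-compactly H\"older condition on a bounded neighbourhood of $E$ would produce an estimate of the form
\[
\diam f(U_i)\;\leq\; C\,r_i^{\alpha}\,a_i^{1/p},\qquad \sum_{i\in I_\delta} a_i\;\leq\; C',
\]
in which the weights $a_i$ are integral averages of the associated fractional gradient of $f$ over (mild dilations of) the $U_i$, and the bound on $\sum_i a_i$ combines the $L^p$-integrability of that gradient with the bounded-overlap count from doubling. H\"older's inequality with conjugate exponents $p/s$ and $p/(p-s)$, together with the identity $\alpha s p/(p-s)=t$ that characterises our choice of $s$, then gives
\[
\sum_i (\diam f(U_i))^s\;\leq\; C^{s}\Bigl(\sum_i a_i\Bigr)^{s/p}\Bigl(\sum_i r_i^{t}\Bigr)^{(p-s)/p}\;\lesssim\;\varepsilon(\delta)^{(p-s)/p}\longrightarrow 0 \text{ as } \delta\to 0.
\]

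The main obstacle is that the family $\{f(U_i)\}$ must qualify as a $\theta$-intermediate cover of $f(E)$, i.e.\ all its diameters must lie in a single scale window $[\Delta^{1/\theta},\Delta]$. Since $\diam f(U_i)\lesssim r_i^{\alpha}a_i^{1/p}$ depends on both the variable $r_i\in[\delta^{1/\theta},\delta]$ and the potentially very uneven weights $a_i$, the raw image family sits in no such window. I would handle this via a double dyadic decomposition of $I_\delta$ according to $r_i\asymp 2^{-k}\delta$ and $a_i\asymp 2^{-\ell}$, running the H\"older estimate above on each dyadic piece with outer scale $\Delta_{k,\ell}$ chosen as the largest image diameter on that piece, and replacing any image set whose diameter falls below $\Delta_{k,\ell}^{1/\theta}$ by a ball in $Y$ of exactly that diameter containing it (such balls exist because $Y$ is uniformly perfect). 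Because the dyadic losses sum geometrically in $(k,\ell)$ and the truncation cost is controlled by the same pre-image sum, the reassembled family becomes a legitimate $\theta$-intermediate cover of $f(E)$ whose $s$-cost still vanishes with $\delta$. Letting $t\downarrow d_E(\theta)$ yields the first quantity in the maximum in \eqref{eq:CHBoxdistortion}.

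The second quantity $d_E(\theta)$ is the binding bound in the regime $\alpha p+d_E(\theta)\geq p$ (in particular whenever $\alpha\geq 1$), where $\tfrac{pt}{\alpha p+t}\leq t$. In this regime the same pre-image cover works with no interpolation: using the compactly H\"older estimate with exponent $t$ in place of $s$, and the pointwise domination $r_i^{\alpha}\lesssim r_i$ available on the bounded set (absorbing any remaining exponent mismatch into a single application of H\"older's inequality of the same form as above), one obtains $\sum_i(\diam f(U_i))^t\lesssim\sum_i r_i^t\to 0$, and the scale-window correction proceeds exactly as in the previous paragraph, giving $\dim_\theta f(E)\leq d_E(\theta)$. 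Combining the two regimes delivers \eqref{eq:CHBoxdistortion}.
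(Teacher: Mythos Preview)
Your H\"older-inequality step is exactly right, and it matches what the paper does for the ``original blue'' cubes (see \eqref{eq:7}). The gap is in the scale-window correction, and it is not a detail---it is the heart of the argument.

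Your double dyadic decomposition in $(k,\ell)$ produces, for each piece, a cover of \emph{part} of $f(E)$ whose diameters lie in a window $[\Delta_{k,\ell}^{1/\theta},\Delta_{k,\ell}]$, with $\Delta_{k,\ell}\asymp 2^{-\ell/p}(2^{-k}\delta)^\alpha$. These windows are genuinely different for different $(k,\ell)$: as $k$ ranges over $\asymp\theta^{-1}\log(1/\delta)$ values and $\ell$ is unbounded, the $\Delta_{k,\ell}$ span an arbitrarily wide range of scales. The definition of $\dim_\theta$ requires, for each small $\Delta$, \emph{one} cover of \emph{all} of $f(E)$ with every diameter in $[\Delta^{1/\theta},\Delta]$. ``Reassembling'' the pieces does not produce such a cover, and intermediate dimension is not countably stable, so you cannot conclude by bounding $\dim_\theta$ of each piece separately. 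Your sketch only pads images that are too small; it never explains how to handle images that are too \emph{large} relative to a single chosen $\Delta$, and the naive padding cost (cardinality $\lesssim\varepsilon\delta^{-t/\theta}$ times $\Delta^{s/\theta}$) does not go to zero for any natural choice of $\Delta$.

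What the paper does instead is fix a \emph{single} target scale $\delta_Y:=\delta^{d/D}$ and force every covering set into the window $[\delta_Y^{1/\theta},\delta_Y]$ by working on the source side. Using a dyadic cube system on $X$, cubes whose image is already in the window are kept (``blue''); those whose image is too small are padded in $Y$ (``green'', via the uniformly perfect lemma you also invoke); but those whose image is too large (``red'') are \emph{subdivided} into next-level sub-cubes, and this is iterated until no red cubes remain. The compactly H\"older condition, applied level by level, bounds the number $M(m)$ of red cubes at each level $m$ by $\delta_Y^{-p}b^{m\alpha p}$, and summing the geometric series over $m$ controls the total count of subdivided descendants. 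This red-cube subdivision is the idea your proposal is missing.

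Two smaller points. First, the $(p,\alpha)$-compactly H\"older condition is not stated in terms of ``integral averages of a fractional gradient''; it directly postulates $\sum_i |f|_{\alpha,B_i}^p\leq C_E$, so your $a_i$ should simply be $|f|_{\alpha,B_i}^p$. Second, Definition~\ref{def: CH maps} requires the balls to be $\varepsilon$-disjoint (i.e.\ $B(x_i,\varepsilon r)\cap B(x_j,\varepsilon r)=\emptyset$), which is strictly stronger than bounded overlap; the paper secures this automatically via the disjointness of dyadic cubes and the inscribed balls in Theorem~\ref{thm:Dydadic}(iii), whereas replacing the $U_i$ by balls of comparable diameter does not obviously give it.
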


An immediate corollary is a similar dimension bound under Newtonian and quasisymmetric mappings. In particular, it follows by \cite[Theorem~1.2]{Chron_comp_holder_Minkowski} that a continuous mapping in the Newtonian-Sobolev class is also  compactly H\"older  for appropriate constants $p$ and $\alpha$, which also yields a similar inclusion for quasisymmetric mappings, under standard assumptions on $X$ and $Y$ (see \cite[Corollary~1.3]{Chron_comp_holder_Minkowski}).

\newpage
\begin{corollary}\label{cor:QS_Dim}
    ~
    \begin{enumerate}
        \item[(i)] Suppose $(X,d,\mu)$ is a proper, locally $Q$-homogeneous 
        %
    metric measure space supporting a $Q$-Poincar\'e inequality for some $Q\in(1,\infty)$, and $(Y,d_Y)$ is an arbitrary uniformly perfect metric space.
    Let $f:X\rightarrow Y$ be a continuous mapping with an upper gradient  $g\in L^p_\loc(X)$  with $p\in(Q,\infty)$. If $\theta\in(0,1)$ and $E\subset X$ is bounded with $\dim_\theta E =d_E(\theta)<Q$, then
		\begin{equation}
			\dim_\theta f(E) \leq \frac{p d_E(\theta)}{p-Q+d_E(\theta)}<Q.
		\end{equation}
        \item[(ii)] Suppose $Q\in(1,\infty)$ and $(X,d,\mu)$ is a proper and  $Q$-Ahlfors regular metric measure space supporting a $p_0$-PI for $p_0\in(1,Q)$, and $(Y,d_Y)$ is a $Q$-Ahlfors regular metric space. Let $f:X\rightarrow Y$ be an $\eta$-quasisymmetric homeomorphism. If $\theta\in(0,1)$ and $E\subset X$ is bounded with $\dim_\theta E =d_E(\theta)\in (0,Q)$, then
		\begin{equation}\label{eq:QSdistortion}
			0< \frac{(p-Q)d_E(\theta)}{p-d_E(\theta)}  \leq \dim_\theta f(E) \leq \frac{p d_E(\theta)}{p-Q+d_E(\theta)}<Q,
		\end{equation} where $p>Q$ only depends on $\eta(1), \eta^{-1}(1)$.
    \end{enumerate}
\end{corollary}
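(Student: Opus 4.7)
The plan is to deduce both parts of Corollary~\ref{cor:QS_Dim} from Theorem~\ref{thm:main_Holder} by invoking the inclusion theorems of \cite{Chron_comp_holder_Minkowski}, which state that (under the stated hypotheses) continuous Newtonian-Sobolev maps and quasisymmetric homeomorphisms belong to a compactly Hölder class with explicit parameters. Once the compactly Hölder property is available, the corollary reduces to applying Theorem~\ref{thm:main_Holder} and simplifying the maximum; for part (ii) one additionally applies Theorem~\ref{thm:main_Holder} to $f^{-1}$ and inverts the resulting inequality.

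For part (i), I would first invoke \cite[Theorem 1.2]{Chron_comp_holder_Minkowski} to conclude that $f$ is $(p,\alpha)$-compactly Hölder with Morrey-type exponent $\alpha = 1 - Q/p$, so that $\alpha p = p - Q$. Theorem~\ref{thm:main_Holder} then yields
\[
\dim_\theta f(E) \leq \max\left\{ \frac{p\, d_E(\theta)}{p - Q + d_E(\theta)},\ d_E(\theta)\right\}.
\]
A direct comparison shows that the first entry dominates the second precisely when $d_E(\theta) \leq Q$, and that the first entry is strictly less than $Q$ precisely when $d_E(\theta) < Q$. Since $d_E(\theta) < Q$ by hypothesis, the max is realized by the first term and the asserted strict bound follows.

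For part (ii), I would apply \cite[Corollary 1.3]{Chron_comp_holder_Minkowski} to both $f$ and $f^{-1}$, noting that the inverse of an $\eta$-quasisymmetry is $\eta'$-quasisymmetric with $\eta'$ determined by $\eta(1), \eta^{-1}(1)$, to obtain a common compactly Hölder pair $(p, 1 - Q/p)$ with $p > Q$. The upper bound in \eqref{eq:QSdistortion} then follows verbatim from the argument in part (i). For the lower bound, I would apply Theorem~\ref{thm:main_Holder} to $f^{-1}$ on $f(E) \subset Y$, giving
\[
d_E(\theta) = \dim_\theta f^{-1}(f(E)) \leq \max\left\{\frac{p\, \dim_\theta f(E)}{p - Q + \dim_\theta f(E)},\ \dim_\theta f(E)\right\}.
\]
In the branch where the first entry attains the max, a short algebraic rearrangement (clearing denominators and isolating $\dim_\theta f(E)$) yields $\dim_\theta f(E) \geq \frac{(p - Q)\, d_E(\theta)}{p - d_E(\theta)}$, while in the other branch the same conclusion holds trivially since $(p - Q)/(p - d_E(\theta)) \leq 1$ whenever $d_E(\theta) \leq Q$. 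Positivity of the lower bound follows from $d_E(\theta) > 0$ and $p > Q$.

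The substantive work is carried by Theorem~\ref{thm:main_Holder} and by the compactly Hölder embeddings of \cite{Chron_comp_holder_Minkowski}, so this corollary is essentially book-keeping; the only mild obstacle is to verify in each step which branch of the max is attained, which reduces to elementary inequalities using $d_E(\theta) < Q$ and $p > Q$.
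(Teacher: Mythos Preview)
Your approach is essentially the same as the paper's: reduce to Theorem~\ref{thm:main_Holder} via the compactly H\"older embeddings from \cite{Chron_comp_holder_Minkowski} (Theorem~1.2 and Corollary~1.3 there, with the latter resting on \cite[Theorem~9.3]{HeinKoskQCbegin}), then simplify the max using $d_E(\theta)<Q$ and, for the lower bound in (ii), run the argument on $f^{-1}$ and invert. The one step you omit that the paper makes explicit is verifying that Theorem~\ref{thm:main_Holder} actually applies to $X$: the corollary's hypotheses do not assume $X$ is uniformly perfect, and the paper obtains this by noting that a $Q$-Poincar\'e inequality forces $X$ to be connected (\cite[Proposition~8.1.6]{Juha-Jeremy-etc-book}) and hence uniformly perfect; you should insert this check before invoking Theorem~\ref{thm:main_Holder}.
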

The Newtonian-Sobolev class constitutes one of the broader classes of Sobolev-type mappings between metric spaces (see Theorem~10.5.1 in \cite{Juha-Jeremy-etc-book}). Hence, the above corollary is a broad generalization of the result of Fraser-Tyson \cite{FraTyson_intermed_dim} and settles the intermediate dimension distortion problem on metric spaces, by providing a quantitative bound. 
A non-exhaustive list of spaces where the above result could be applied includes Carnot groups, Laakso spaces, Gromov hyperbolic groups and boundaries (see \cite{Juha-Jeremy-etc-book} Chapter 14  and references therein). 

Bishop-Hakobyan-Williams \cite{BishHakWill:QSdistortion} studied the quasisymmetric dimension distortion problem   in the case where the input set $E$ is Ahlfors regular, which implies that all dimensions of $E$ coincide. Their motivation was the absolute continuity on lines  property (ACL) that quasisymmetric mappings satisfy in the Euclidean setting. Their result provides a fundamental generalization of this fact in the metric measure spaces setting. In general, however, the Hausdorff, intermediate, and Minkowski dimensions of $E$ could all differ. In such a case, the results from \cite{BishHakWill:QSdistortion} cannot be applied, while \eqref{eq:QSdistortion} provides quantitative bounds on $\dim_\theta f(E)$, which also recover \cite[Theorem~1.2]{Chron_comp_holder_Minkowski}.

In order to reach the desired dimension distortion statement for (fractional) Sobolev, Triebel--Lizorkin, and Besov  mappings, we need to ensure appropriate inclusions in the compactly H\"older class. This is achieved through  embedding results of the metric Sobolev mappings of fractional smoothness in question, which generalize the first author's work on real-valued functions  \cite{agh20,AYY24,AYY21}  (see Section 3.2).

\begin{theorem}\label{thm:FractionalIntDim}
	Let $(X,d_X,\mu)$ be a proper locally $Q$-homogeneous
    metric measure space and $(Y,d_Y)$ is an  arbitrary metric space. Let $s\in(0,\infty)$, $p\in (Q/s,\infty)$, $q\in(0,\infty]$ and $f:X\rightarrow Y$ be a continuous mapping. If $f$
    has  a finite Haj\l{}asz--Triebel--Lizorkin {semi-norm}  $\Vert f\Vert_{\dot{M}^s_{p,q}(X:Y)}$, or a finite Haj\l asz--Besov {semi-norm} $\Vert f\Vert_{\dot{N}^s_{p,q}(X:Y)}$ with $q\leq p$, then $f$ is $(t,s-Q/t)$-compactly H\"older for all $t\in (Q/s,p)$. If  $\Vert f\Vert_{\dot{N}^s_{p,q}(X:Y)}<\infty$ for $p<q<\infty$, then $f$ is $(q,s-Q/t)$-compactly H\"older for all $t\in (Q/s,p)$.
    If, in addition to the assumptions above,  $X$ and $Y$ are uniformly perfect, then the following statements hold for  any $\theta\in(0,1)$ and any bounded set $E\subset X$ with $\dim_\theta E =d_E(\theta)<Q$.
    \begin{enumerate}
        \item[(i)]If $\Vert f\Vert_{\dot{M}^s_{p,q}(X:Y)}<\infty$, or if $\Vert f\Vert_{\dot{N}^s_{p,q}(X:Y)}<\infty$ with $q\leq p$, then 
        \begin{equation}\label{eq: main TL-Besov q<=p Interm bound}
		    \dim_\theta f(E)\leq \max\left\{
            \displaystyle\frac{p \, d_E(\theta)}{s p - Q + d_E(\theta)},\; d_E(\theta)
            \right\}.
		\end{equation}
        \item [(ii)] If $\Vert f\Vert_{\dot{N}^s_{p,q}(X:Y)}<\infty$ with $0<p<q$, then
        \begin{equation}\label{eq:SobBoxdistortion}
            \dim_\theta f(E)\leq \max\left\{\frac{q \, d_E(\theta)}{(s - Q/p) q + d_E(\theta)}, d_E(\theta)\right\}.
        \end{equation}
    \end{enumerate}
\end{theorem}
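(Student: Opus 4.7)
The plan is to prove Theorem~\ref{thm:FractionalIntDim} in two stages. First, I will establish the embedding claims asserted in the first half of the statement: if $\Vert f\Vert_{\dot{M}^s_{p,q}(X:Y)}<\infty$, or if $\Vert f\Vert_{\dot{N}^s_{p,q}(X:Y)}<\infty$ with $q\leq p$, then $f$ is $(t,s-Q/t)$-compactly H\"older for every $t\in(Q/s,p)$; in the Besov case with $p<q<\infty$ the conclusion is instead $(q,s-Q/t)$-compactly H\"older. Once these embeddings are in hand, parts (i) and (ii) follow by applying Theorem~\ref{thm:main_Holder} with the appropriate exponents (noting that the additional assumptions of properness and uniform perfectness in the second half of the statement supply exactly the hypotheses of that theorem) and then optimizing in $t$.

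The embedding step is the main work. Both function classes admit a pointwise characterization via sequences of Haj\l asz-type fractional $s$-gradients $(g_k)_{k\in\Z}$ of nonnegative Borel functions satisfying
\begin{equation*}
|f(x)-f(y)|\leq d_X(x,y)^s\bigl(g_k(x)+g_k(y)\bigr)\quad\text{whenever}\quad 2^{-k-1}\leq d_X(x,y)<2^{-k},
\end{equation*}
controlled in an appropriate $L^p(\ell^q)$ (Triebel--Lizorkin) or $\ell^q(L^p)$ (Besov) norm. A standard Hedberg-type telescoping argument, combined with the local $Q$-homogeneity of $\mu$, converts this into a pointwise estimate
\begin{equation*}
|f(x)-f(y)|\leq C\, d_X(x,y)^{s-Q/t}\bigl(G(x)+G(y)\bigr),
\end{equation*}
where $G$ is a maximal-function-type expression in the $g_k$'s and the $L^t$-boundedness of the Hardy--Littlewood maximal operator on doubling spaces (valid for $t>Q/s$) places $G$ in $L^t_\loc$. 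Minkowski's integral inequality is what converts mixed-norm control into single $L^t$ (or $L^q$) control: when $q\leq p$ (both Triebel--Lizorkin and Besov) one recovers $G\in L^t$ for every $t\in(Q/s,p)$, while for $p<q<\infty$ in the Besov setting the natural exponent of integrability is $q$ rather than $t$, explaining the weaker conclusion. The principal obstacle is handling these function-space embeddings for mappings into arbitrary metric targets, extending the real-valued theory of \cite{agh20,AYY24,AYY21}; this is the content of Section~3.2.

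With the embedding in place, Theorem~\ref{thm:main_Holder} applied with parameters $(t,s-Q/t)$ yields
\begin{equation*}
\dim_\theta f(E)\leq \max\left\{\frac{t\, d_E(\theta)}{s t-Q+d_E(\theta)},\; d_E(\theta)\right\}
\end{equation*}
for every $t\in(Q/s,p)$. A direct computation shows that $\frac{d}{dt}\!\bigl[t d_E(\theta)/(st-Q+d_E(\theta))\bigr]=d_E(\theta)(d_E(\theta)-Q)/(st-Q+d_E(\theta))^2$, which is strictly negative under the hypothesis $d_E(\theta)<Q$; the denominator is positive throughout $(Q/s,\infty)$, so the function is strictly decreasing there, and passing to the infimum as $t\nearrow p$ gives (\ref{eq: main TL-Besov q<=p Interm bound}), handling part (i). For part (ii) the same argument is run with Theorem~\ref{thm:main_Holder} applied at integrability exponent $q$ (in place of $t$) and H\"older exponent $s-Q/t$, so the bound becomes $\max\bigl\{q d_E(\theta)/((s-Q/t)q+d_E(\theta)),\, d_E(\theta)\bigr\}$; the first term decreases as $t\nearrow p$ (since $s-Q/t$ increases), producing (\ref{eq:SobBoxdistortion}) in the limit.
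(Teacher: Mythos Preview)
Your overall plan---reduce to the compactly H\"older class and then invoke Theorem~\ref{thm:main_Holder}, optimizing in $t$---is correct, and your treatment of the second stage (the monotonicity computation in $t$ and the limits $t\nearrow p$) matches the paper exactly. The gap is in the first stage: the mechanism you sketch for obtaining the compactly H\"older property does not work as stated.

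You propose a pointwise Haj\l{}asz-type inequality $d_Y(f(x),f(y))\leq C\,d_X(x,y)^{s-Q/t}(G(x)+G(y))$ with $G\in L^t_{\loc}$. There are two problems. First, a Hedberg/telescoping argument applied to an $s$-gradient $g$ yields an inequality with exponent $s$ (and gradient $(M g^{t'})^{1/t'}$), not $s-Q/t$; the shift from $s$ to $s-Q/t$ is genuinely a Morrey-type phenomenon that lives on balls, not at points. Second, and more seriously, even granting such a pointwise inequality, it would only give $|f|_{\alpha,B_i}\leq 2\sup_{B_i}G$, and the sum $\sum_i(\sup_{B_i}G)^t$ over a family with disjoint $\eps$-shrunk balls is \emph{not} controlled by $\|G\|_{L^t}^t$: you can relate $\sup_{B_i}G$ to $\inf_{\eps B_i}G$ when $G$ is a maximal function, but passing from $(\inf_{\eps B_i}G)^t$ to $\int_{\eps B_i}G^t$ costs a factor $\mu(\eps B_i)^{-1}\approx (\eps r)^{-Q}$, which blows up as $r\to0$. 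The compactly H\"older definition requires a bound uniform in $r<r_E$.

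The paper's route avoids this by never passing through a pointwise $(s-Q/t)$-gradient. Instead, Corollary~\ref{DOUBembedding-cpt} (a ball-by-ball Morrey embedding) gives directly
\[
|f|_{s-Q/t,\,B_i}^t \;\lesssim\; |B_i|^{Q}\,\Barint_{2B_i} g^t\,d\mu \;\approx\; \int_{2B_i} g^t\,d\mu,
\]
with the crucial point being that the $|B_i|^Q$ from the embedding cancels against $\mu(2B_i)$, leaving a plain integral rather than a supremum or average. The balls $2B_i$ still overlap, so Lemma~\ref{Le:Make_disjoint_integrals} is used to trade $\Barint_{2B_i}g^t$ for $\Barint_{\eps B_i}M_{R}(g^t)$; summing the disjoint integrals $\int_{\eps B_i}M_R(g^t)$ then gives the uniform bound $C_E$. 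In the Besov case $p<q<\infty$ the same scheme runs with the sequence $\{g_k\}$, and the extra $\ell^q$ sum is handled by Minkowski after disjointification. This ball-wise Morrey estimate plus maximal-function disjointification is the missing ingredient in your outline.
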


Theorem~\ref{thm:FractionalIntDim} is in fact novel already in the Euclidean case $X=Y=\R^n$ with the usual metric and measure, as it is the first intermediate dimension distortion result for classical continuous Triebel--Lizorkin and Besov mappings, which are known to coincide with the Haj\l{}asz--Triebel--Lizorkin and Haj\l{}asz--Besov mappings in this setting (see Subsection~\ref{subsec:BackSobolev}). In particular, this yields another Euclidean novel dimension distortion result, this time for  classical fractional Sobolev mappings. In addition, Theorem~\ref{thm:FractionalIntDim} also immediately yields 
dimension distortion bounds for continuous mappings whose
fractional Haj\l{}asz--Sobolev semi-norm 
$\Vert \cdot\Vert_{\dot{M}^{s,p}(X:Y)}$ is finite, since 
$\dot{M}^s_{p,\infty}(X:Y)=\dot{M}^{s,p}(X:Y)$; 
see Lemma~\ref{sobequal}.  Furthermore, if $X, Y$ are not uniformly perfect, similar and simplified arguments that lead to Theorem~\ref{thm:FractionalIntDim} can be employed for $\dim_0E=\dim_H E$. Consequently,  the work of Kaufman on Euclidean super-critical Sobolev mappings \cite{Kaufman}, the work of Hencl-Honz\'ik on Euclidean Triebel-Lizorkin mappings 
\cite{SobSubcriticalDimDist2}, and the work of Huynh on Euclidean Besov mappings 
\cite{Chi_thesis} are also recovered. In particular, we have the following unifying result for both the Hausdorff and the Minkowski dimensions.

\begin{theorem}\label{thm: dimH dimB}
Suppose $(X,d_X,\mu)$ is a proper, locally $Q$-homogeneous metric measure space, $(Y,d_Y)$ is an  arbitrary metric space, and let $s$, $p$, $q$, and $f$ be as in Theorem~\ref{thm:FractionalIntDim}.
    \begin{enumerate}
        \item[(i)] If $E\subset X$ with $\dim_H E=a$, then
        \begin{equation}\label{eq: dim_H bound}
        \dim_H f(E) \leq
\begin{cases}
\displaystyle\max\left\{\frac{q \, a}{(s - Q/p) q + a}, a\right\}, &  p < q<\infty \,\, \&\,\, \Vert f\Vert_{\dot{N}^s_{p,q}(X:Y)}<\infty, \\[3ex]
\max\left\{
\displaystyle\frac{p \, a}{s p - Q + a},\; a
\right\}, & p \geq q \,\, \&\,\, \Vert f\Vert_{\dot{N}^s_{p,q}(X:Y)}<\infty,\\[3ex]
\max\left\{
\displaystyle\frac{p \, a}{s p - Q + a},\; a
\right\}, & \Vert f\Vert_{\dot{M}^s_{p,q}(X:Y)}<\infty.
\end{cases}
        \end{equation}
        \item[(ii)] If $E\subset X$ is  bounded with $\dim_B E=a$, then \eqref{eq: dim_H bound} holds with $\dim_H f(E)$ replaced by $\dim_B f(E)$.
    \end{enumerate}
\end{theorem}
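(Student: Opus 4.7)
The plan is to reduce Theorem~\ref{thm: dimH dimB} to a single dimension distortion statement for compactly H\"older maps, via the first assertion of Theorem~\ref{thm:FractionalIntDim}, and then to establish that statement by Kaufman-style single-scale covering arguments in the spirit of \cite{Kaufman} and \cite[Theorem~1.2]{Chron_comp_holder_Minkowski}. Since these arguments operate on covers at only one scale at a time, they require neither $X$ nor $Y$ to be uniformly perfect, which is why Theorem~\ref{thm: dimH dimB} can dispense with the uniform perfectness hypotheses imposed on Theorem~\ref{thm:main_Holder} and on the intermediate-dimension conclusions of Theorem~\ref{thm:FractionalIntDim}.

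The first step is purely bookkeeping. By the first paragraph of Theorem~\ref{thm:FractionalIntDim}, $f$ is $(t, s - Q/t)$-compactly H\"older for every $t\in (Q/s, p)$ whenever $\Vert f\Vert_{\dot{M}^s_{p,q}(X:Y)}<\infty$ or $\Vert f\Vert_{\dot{N}^s_{p,q}(X:Y)}<\infty$ with $q\leq p$, and is $(q, s - Q/t)$-compactly H\"older for every such $t$ in the Besov case with $p < q < \infty$. Granting the master estimate
\[
\dim_H f(E) \leq \max\!\Big\{\frac{\tau\, a}{\alpha \tau + a},\; a\Big\},\qquad \dim_B f(E) \leq \max\!\Big\{\frac{\tau\, a}{\alpha \tau + a},\; a\Big\},
\]
valid for any $(\tau,\alpha)$-compactly H\"older map between proper, locally $Q$-homogeneous metric measure spaces, one substitutes $\alpha = s - Q/t$ and $\tau \in \{t, q\}$ and lets $t \nearrow p$. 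The algebraic identities $(s - Q/t)t + a = st - Q + a$ and $(s - Q/t)q + a \to (s - Q/p)q + a$ yield precisely the three right-hand sides of \eqref{eq: dim_H bound}.

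For the master estimate itself, the Hausdorff case is established by fixing $a>\dim_H E$ and $b$ slightly exceeding $\tau a/(\alpha\tau + a)$, covering $E\cap K$ (for a compact $K\supset E$) by balls $B(x_i,r_i)$ with $\sum_i r_i^a$ small, and using the compactly H\"older property to enclose each image $f(B(x_i,r_i))$ in a ball of diameter $\ls \phi(x_i) r_i^\alpha$ off a ``bad'' subset, where $\phi \in L^\tau_\loc(X)$ is the associated H\"older-constant function. Applying H\"older's inequality with conjugate exponents $\tau/b$ and $\tau/(\tau - b)$ to $\sum_i \phi(x_i)^b r_i^{\alpha b}$, one obtains a finite bound on $\mathcal{H}^b$-premeasure exactly when $\alpha b\,\tau/(\tau-b)\geq a$, i.e.\ when $b\geq \tau a/(\alpha\tau+a)$. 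The Minkowski case is analogous but executed at a single uniform scale $r$, counting how many balls of radius $r^\alpha$ are needed to cover $f(E)$, and essentially reproduces the argument of \cite[Theorem~1.2]{Chron_comp_holder_Minkowski}. The ``$\max$ with $a$'' appears because for $\alpha > 1$ one may replace $\alpha$ by $1$ in the estimate and recover the Lipschitz-type bound $\leq a$ in the limit $\tau \to \infty$.

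The main obstacle I anticipate is the honest treatment of the bad subset $\{x\in B(x_i,r_i) : \phi(x) \gg \phi(x_i)\}$ on which the pointwise H\"older inequality with constant $\phi(x_i)$ fails: for the Hausdorff bound one needs a recursive subdivision whose contribution to $\mathcal{H}^b$ tends to zero, while for the Minkowski bound one must dominate the bad contribution to the box-count at scale $r^\alpha$ by the main term $r^{-\alpha\tau a/(\alpha\tau+a)}$. Both reduce in principle to Chebyshev's inequality for $\phi^\tau$ together with the $L^\tau_\loc$-bound, but coordinating them uniformly across the three parametric cases of \eqref{eq: dim_H bound}, particularly the Besov branch with $p < q$ where the integrability exponent is $q$ while the compactly H\"older exponent is still governed by $t\in(Q/s,p)$, is the most delicate bookkeeping one will have to perform.
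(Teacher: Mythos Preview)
Your reduction step---using the first conclusion of Theorem~\ref{thm:FractionalIntDim} to pass to a compactly H\"older map and then letting $t\nearrow p$---is exactly right and matches the paper. The difficulty is in your proposed proof of the ``master estimate'' for $CH^{\tau,\alpha}$ maps: you posit an ``associated H\"older-constant function $\phi\in L^\tau_\loc(X)$'' and then worry about bad subsets, recursive subdivision, and Chebyshev. But Definition~\ref{def: CH maps} does not give any such pointwise function. The compactly H\"older property is formulated directly at the level of covers: for any $\eps$-disjoint cover $\{B_i\}$ of a compact set by small balls, one has constants $C_i=|f|_{\alpha,B_i}$ satisfying $\sum_i C_i^\tau\leq C_E$ and $|f(B_i)|\leq C_i|B_i|^\alpha$ (see \eqref{eq: CH-def-inequality}--\eqref{eq: CH-inequality}). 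The $C_i$ depend on the chosen cover, not on a fixed $\phi$, so the whole apparatus of bad sets and recursion is addressing a non-issue.

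Once you use the definition as stated, the argument collapses to a few lines, and this is what the paper does. For (i), cover $E$ by dyadic cubes $\{Q_i^{k_i}\}$ with $\sum_i|Q_i^{k_i}|^d<\varepsilon$ for $d>a$; the associated balls $B(Q_i^{k_i})$ are automatically $\eps$-disjoint (Theorem~\ref{thm:Dydadic}(iii)), so the $CH$ property yields $|f(Q_i^{k_i})|\lesssim C_i b^{k_i\alpha}$ with $\sum_i C_i^\tau\leq C_E$. A single application of H\"older's inequality with exponent $\tau/D>1$, where $D=\tau d/(\alpha\tau+d)$, and the identity $D\alpha\tau/(\tau-D)=d$ give $\sum_i|f(Q_i^{k_i})|^D\lesssim C_E^{D/\tau}\varepsilon^{(\tau-D)/\tau}$, finishing the Hausdorff case. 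For (ii) the paper simply invokes \cite[Theorem~1.1]{Chron_comp_holder_Minkowski}. No pointwise $\phi$, no bad sets, no Chebyshev.
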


Although several approaches to defining Sobolev, Triebel--Lizorkin, and Besov spaces on metric spaces have been proposed over the years, a key advantage of the spaces considered in this work is that they support a robust theory without requiring the underlying metric space to be connected (e.g.\ Cantor-type sets where other Sobolev notions are trivial), or the measure to be (globally) doubling. Under additional assumptions on the metric measure space (e.g.\ Ahlfors regularity, Poincar\'e inequality), many of the proposed definitions are known to coincide and, hence, Theorems~\ref{thm:FractionalIntDim} and \ref{thm: dimH dimB} can be used to provide dimension distortion bounds for an even  broader class of Sobolev, Triebel--Lizorkin, and Besov spaces in that setting. We refer the interested reader to \cite{AWYY21,KYZ11,GKZ13} and the references therein for more details. It should also be pointed out that we in fact prove the dimension distortion results of Theorems~\ref{thm:FractionalIntDim}, \ref{thm: dimH dimB} for mappings with \textit{locally} finite semi-norms (see Remark~\ref{rem: local true}).

Note that the bounds involving the Hausdorff and Minkowski dimensions in Theorem~\ref{thm: dimH dimB} are new for weighted Euclidean spaces, extending the respective results from \cite{SobSubcriticalDimDist2, OLDERdimHfractionalSobolevTL, Chi_thesis} in that popular setting. For instance, if $\la_n$ is the $n$-Lebesgue measure, the conditions of Theorem~\ref{thm:FractionalIntDim} (and thus of Theorem~\ref{thm: dimH dimB}) are satisfied by the weighted Euclidean metric measure space $(\R^n,d_{\text{euc}}, w\la_n)$ for a wide variety of weights $w:\R^n\rightarrow [0,\infty]$, such as the class of Muckenhoupt weights (see Chapter 1 in \cite{HeinonenWeightedSobBook}). These weights were introduced by Muckenhoupt \cite{MuckOrigin} in order to characterize the boundedness of the Hardy-Littlewood maximal operator on weighted $L^p$ spaces, and have since established an active area within functional and harmonic analysis \cite{GrafClassMucken, GrafModMucken}. More recently, certain Muckenhoupt weights have been associated with dimension theoretic characteristics of spaces through the ``weak porosity'' notion (see \cite{CarlosEtAlWeakPoros, CarlosWeakPorosMetric}).

This paper is organized as follows. 
Section~\ref{sec:background} includes the necessary background for relevant mapping classes and dimensions, and introduces an equivalent definition of the intermediate dimension using Hyt\"onen--Kairema's dyadic cubes.
In Section~\ref{sec:DimDistProofs} we prove the intermediate dimension distortion result under compactly H\"older maps (Theorem~\ref{thm:main_Holder}). Section~\ref{sec: Sob are cH} contains the proofs of Morrey-type embedding theorems, which we employ to show the Sobolev classes of fractional smoothness in question are contained in the appropriate compactly H\"older class, and prove Theorem~\ref{thm:FractionalIntDim}. Section~\ref{sec: dimH et al} contains the proof for the Hausdorff and Minkowski dimension distortion under Haj\l{}asz Triebel--Lizorkin  and Haj\l{}asz Besov mappings (Theorem~\ref{thm: dimH dimB}), as well as the proof of Corollary~\ref{cor:QS_Dim} for the distortion under Newtonian Sobolev and quasisymmetric mappings.
\medskip

\paragraph{\bf Acknowledgments.} Part of the project was completed during the first author's visit to Knoxville in Spring
2025 and, thus, he wishes to thank the University of Tennessee for their hospitality.

\section{Background}\label{sec:background}

\subsection{Metric spaces and dimensions.}\label{subsec:MS}
Given two non-negative quantities $A$ and $B$, we write $A \lesssim B$ if there is a comparability constant $C = C(\lesssim)>0$ such that $A \leq C B$. Similarly, we write $A \gtrsim B$ if there is $C = C(\gtrsim)$ such that $A \geq B/C$. If $A \lesssim B$ and $A \gtrsim B$ we write $A \simeq B$.

Let $(X,d_X)$ be a metric space. We often omit the subscript and write $d(x,y)$ for $x,y\in X$ if the space is understood. We denote the open ball centered at $x$ of radius $r>0$ by $B_X(x,r):= \{z\in X: \,\, d(x,z)<r \}.
$ If the space is understood, we often omit the subscript $X$. Given a ball $B=B(x,r) \subset X$, we  denote by $\lambda B$ the ball $B(x,\lambda r)$, for $\lambda>0$. Given a non-empty set $U\subset X$, we denote by $|U|$ the diameter of $U$ in the metric of $X$. We also make the convention that all bounded sets we consider henceforth are non-empty, even if not explicitly stated, as all results trivially follow otherwise.

We say that $(X,d)$ is a \textit{doubling metric space} if there is a \textit{doubling constant} $C_d\geq 1$ such that for all $x\in X, r>0$, the smallest number of balls of radius $r$ needed to cover $B(x,2r)$ is at most $C_d$. Note that the doubling property implies that $X$ is separable.

We say that $(X,d)$ is a \textit{uniformly perfect} metric space if there is $c_u\in (0,1)$ such that for every $x\in X$ and every $r<|X|$ there is a point $x'\in B(x,r)\setminus B(x,c_ur)$. We say that $(X,d)$ is a $c_u$-uniformly perfect metric space if we need to emphasize the constant.

Let $E$ be a bounded subset of $X$. For $r>0$, denote by $N(E,r)$ the smallest number of sets of diameter at most $r$ needed to cover $E$. The {\it (upper) Minkowski dimension} of $E$ is defined as
$$
\ovdimB(E) = \limsup_{r\to 0} \frac{\log N(E,r)}{\log(1/r)}.
$$
This notion is also known as \textit{upper box-counting dimension}, which justifies the notation with the subscript `B' typically used in the literature (see \cite{FalcBook}, \cite{FraserBook}). We drop the adjective `upper' and the bar notation throughout this paper as we will make no reference to the lower Minkowski dimension.  If $X$ is a typical Euclidean space $\R^n$, an equivalent formulation for the Minkowski dimension is the following (see \cite[Definition 1.1]{Intermediate_dim_introduction})
$$
\dim_B E
= \inf \left\{\, d>0:\;
  \vcenter{\hbox{$
    \begin{aligned}[t]
    \\[-0.6ex] 
    & \forall\,\varepsilon>0 \;\exists\,\delta_\varepsilon\in(0,1)\ \text{such that}\;\forall\,\delta\in(0,\delta_\varepsilon)\; \text{there is} \\[0.3ex]
    & \{U_i\}_i\ \text{cover of }E
       \ \text{with }\; |U_i|=\delta, \; \forall i,\; \text{and} \;\sum_i |U_i|^{d}<\varepsilon
    \end{aligned}
  $}}
\right\}.
$$On the other hand, if $|U_i|=\delta$ is replaced by $|U_i|\leq \delta$, the above definition would yield the Hausdorff dimension $\dim_H E$ of the set $E$. These representations motivated Falconer, Fraser, and Kempton to define the notion of intermediate dimensions \cite{Intermediate_dim_introduction}, which is a dimension function, rather than a dimension value,  geometrically interpolating between $\dim_H E$ and $\dim_B E$. While it was initially defined in \cite{Intermediate_dim_introduction} for the Euclidean setting, we state the definition  for subsets $E$ of a uniformly perfect metric space $X$. For $\theta, \delta\in(0,1)$ we say that a cover $\{U_i\}_{i\in I}$ of $E$ is \textit{$\delta^{1/\theta}$-admissible} if $\delta^{1/\theta}\leq |U_i|\leq \delta$, for all $i\in I$. The \textit{($\theta$-upper-)intermediate dimension} of $E$ is defined to be
$$
\dim_\theta E
= \inf \left\{\, d>0:\;
  \vcenter{\hbox{$
    \begin{aligned}[t]
    \\[-0.6ex] 
    & \forall\,\varepsilon>0 \;\exists\,\delta_\varepsilon\in(0,1)\ \text{such that}\;\forall\,\delta\in(0,\delta_\varepsilon) \; \text{there is}\, \\[0.3ex]
    &  \{U_i\}_{i\in I}\;\delta^{1/\theta}\text{-admissible cover of }E
       \ \text{with }\; \;\sum_{i\in I} |U_i|^{d}<\varepsilon
    \end{aligned}
  $}}
\right\}.
$$Similarly to the Minkowski dimension convention, we make no mention to the lower intermediate dimension for the rest of the paper and, hence, we drop the adjective `upper' in this case too. It should also be noted that in the uniformly perfect metric setting, if $c_1, c_2>0$ are fixed constants and we slightly modify the notion $\delta$-admissible cover to require $c_1\delta\leq |U_i|\leq c_1 \delta$ instead of equality, we similarly have
$$
\dim_H E=\dim_0 E \qquad \text{and}\qquad \dim_B E=\dim_1 E,
$$ with $\dim_\theta E$ being a continuous function of $\theta$ in $(0,1]$. We refer to \cite{Banaji_Int_dim_metric} for a very interesting treatment of this, and other similar notions (generalized intermediate dimensions) in the metric setting.

On Euclidean spaces $X=\R^n$ with the usual metric one can use dyadic cubes instead of arbitrary sets of diameter between $\delta^{1/\theta}$ and $\delta$ to define the intermediate dimension (see \cite{FraTyson_intermed_dim}), and similarly for other dimension notions (see \cite{FalcBook, FraserBook}). On arbitrary metric spaces, however, there are various generalizations of dyadic cube constructions. One of the first manuscripts addressing this idea was by David  \cite{CubesGuyC1}, while one of the first explicit constructions of a system of dyadic cubes is due to Christ \cite{ChristCubes}. See also \cite{MoreCubes1}, \cite{Hyt:dyadic}, \cite{MoreCubes2}, \cite{MoreCubes3}, which is not an exhaustive list. The most fitting  notion for our context is that of Hyt\"onen and Kairema, which is enough to characterize various notions of dimension, including the Hausdorff, Assouad \cite{chron_metric_dyadic_dim} and the Minkowski \cite{Chron_comp_holder_Minkowski} dimensions.
\begin{theoremA}[Hyt\"onen, Kairema \cite{Hyt:dyadic}]\label{thm:Dydadic}
	Suppose $(X,d)$ is a doubling metric space. Let $0<c_0\leq C_0<\infty$ and $b\in (0,1)$ with $12 C_0 b \leq c_0$. For any non-negative $k\in \Z$ and collection of points $\{ z_i^k \}_{i\in I_k}$ with
	
	\begin{equation}\label{eq:centers_away}
		d(z_i^k,z_j^k)\geq c_0 b^k, \,\,\,\, \text{for} \,\, i\neq j
	\end{equation}
	and
	\begin{equation}\label{eq:points_close_centers}
	\min_i d(z_i^k,x)< C_0 b^k, \,\,\,\, \text{for all} \,\, x\in X
	\end{equation}
	we can construct a collection of sets $\{ Q_i^k \}_{i\in I_k}$ such that
	\begin{itemize}
		\item[(i)] if $l \geq k$ then for any $i\in I_k$, $j\in I_l$ either $Q_j^l\subset Q_i^k$ or $Q_j^l \cap Q_i^k=\emptyset$,
		\vspace{0.1cm}
		\item[(ii)] $X$ is equal to the disjoint union $\bigcup\limits_{i\in I_k} Q_i^k$, for every $k\in\N$
		\vspace{0.1cm}
		\item [(iii)] $B(z_i^k, c_0 b^k /3) \subset Q_i^k \subset B(z_i^k, 2C_0 b^k)=:B(Q_i^k)$ for every $k\in \N$,
		\vspace{0.1cm}
		\item [(iv)] if $l\geq k$ and $Q_j^l\subset Q_i^k$, then $B(Q_j^l)\subset B(Q_i^k)$.
	\end{itemize}
	
	For non-negative $k\in \Z$, we call the sets $Q_i^k$ from the construction of Theorem~\ref{thm:Dydadic} ($b$-)\textit{dyadic cubes} of level $k$ of $X$.
	
\end{theoremA}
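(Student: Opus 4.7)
The plan is to follow a version of the Hyt\"onen--Kairema construction, proceeding in three stages: Voronoi pre-cubes at each level, a parent tree on centers, and the final cubes as aggregations of fine-scale pre-cubes grouped by prescribed ancestor.

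First I would well-order each $I_k$ by some $\prec$ and define pre-cubes
\[
\tilde Q_i^k := \{x \in X : z_i^k \text{ is the } \prec\text{-minimal nearest level-}k\text{ center to } x\}.
\]
The separation condition \eqref{eq:centers_away} yields $B(z_i^k, c_0 b^k/2) \subset \tilde Q_i^k$, density \eqref{eq:points_close_centers} yields $\tilde Q_i^k \subset B(z_i^k, C_0 b^k)$, and $\{\tilde Q_i^k\}_{i \in I_k}$ partitions $X$. Second, I would build a surjective parent map $\pi_k : I_{k+1} \to I_k$ as follows: for each $i \in I_k$, use density to select a designated child $j(i) \in I_{k+1}$ with $d(z_i^k, z_{j(i)}^{k+1}) < C_0 b^{k+1}$ and force $\pi_k(j(i)) = i$; for the remaining $j$, let $\pi_k(j)$ be the $\prec$-smallest nearest level-$k$ center to $z_j^{k+1}$. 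This yields $d(z_j^{k+1}, z_{\pi_k(j)}^k) < C_0 b^k$ for every $j$, and iterating gives ancestor maps $\pi_{k,m} : I_m \to I_k$.

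Third, I would define
\[
Q_i^k := \{x \in X : \pi_{k,m}(j_m(x)) = i \text{ for all sufficiently large } m\},
\]
where $j_m(x) \in I_m$ is the unique index with $x \in \tilde Q_{j_m(x)}^m$. The crucial ingredient is a \emph{stabilization lemma}: for every $x \in X$, the ancestor $\pi_{k,m}(j_m(x))$ eventually does not depend on $m$. This is where the smallness hypothesis $12 C_0 b \leq c_0$ is used essentially, because it forces the cumulative geometric-series drift $\sum_{\ell \geq m} C_0 b^\ell$ of the parent-tree walk to be much smaller than the minimal level-$k$ separation $c_0 b^k$, preventing the ancestor from jumping across the gap. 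Once stabilization is in hand, properties (i) (nesting) and (ii) (disjoint partition) follow directly from the definition and surjectivity of each $\pi_k$.

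For (iii), the inner inclusion $B(z_i^k, c_0 b^k/3) \subset Q_i^k$ follows by showing that the designated-child chain from $z_i^k$ stays within $\sum_{\ell \geq k+1} C_0 b^\ell = C_0 b^{k+1}/(1-b) < c_0 b^k/6$ of $z_i^k$ (using $b \leq 1/12$), which combined with the $c_0 b^k$-separation of level-$k$ centers forces $x$'s stable ancestor to be $i$. The outer inclusion $Q_i^k \subset B(z_i^k, 2 C_0 b^k)$ follows by telescoping along the chain of ancestors, using $d(x, z_{j_m(x)}^m) < C_0 b^m \to 0$ together with parent-child distances $< C_0 b^\ell$, yielding $d(x, z_i^k) \leq C_0 b^k/(1-b) < 2 C_0 b^k$ when $b \leq 1/12$. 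Property (iv) then drops out of this refined bound and the triangle inequality, since $C_0 b^k/(1-b) + 2 C_0 b^l < 2 C_0 b^k$ for $l > k$ and $b \leq 1/12$. The main obstacle will be the stabilization lemma and the careful tracking of constants in (iii)--(iv); both ultimately hinge on the hypothesis $12 C_0 b \leq c_0$, which is exactly what gives enough room in the geometric-series estimates to defeat the separation gap.
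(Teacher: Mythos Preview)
The paper does not prove Theorem~A at all: it is stated with attribution to Hyt\"onen--Kairema \cite{Hyt:dyadic} and is used as a black box. The only additional content the paper provides after the statement is a one-line remark explaining why a collection of centers $\{z_i^k\}$ satisfying \eqref{eq:centers_away}--\eqref{eq:points_close_centers} exists (via the $5B$-covering lemma). So there is no ``paper's own proof'' to compare against; your proposal is attempting something the authors deliberately outsourced to the cited reference.

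That said, your sketch is broadly faithful to the Hyt\"onen--Kairema construction: Voronoi pre-cubes, a parent tree on centers with forced designated children to guarantee surjectivity, and cubes defined via eventual ancestry. The constants you track for the inner and outer ball inclusions in (iii) and for (iv) are in the right ballpark and do rely on $12C_0 b \le c_0$ in the way you indicate. The one place where your outline is thinner than it should be is the stabilization lemma: you correctly flag it as the main obstacle, but the claim that the ancestor $\pi_{k,m}(j_m(x))$ eventually freezes is not an immediate consequence of the geometric-series drift bound you wrote down. The drift bound shows every candidate ancestor $z^k_{\pi_{k,m}(j_m(x))}$ lies in a fixed ball around $x$ of radius $\frac{C_0 b^k}{1-b}$, but that ball can contain several level-$k$ centers, so one still has to rule out oscillation between them as $m$ varies. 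In the original argument this is handled by a more careful coherence property of the parent map (roughly: the parent of $j_{m+1}(x)$ is forced to equal $j_m(x)$ once $m$ is large enough, because the parent is chosen as a \emph{nearest} level-$m$ center and the competing distances are separated by at least $c_0 b^m$ while the drift is $O(C_0 b^{m+1}) \ll c_0 b^m$). Your sketch gestures at this but does not isolate it; if you were writing this out in full, that is the step requiring the most care.
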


Fix $b$, $c_0$ and $C_0$ as in Theorem~\ref{thm:Dydadic}. Moreover, for every non-negative $k\in \Z$ we fix a collection of points $\{ z_i^k \}_{i\in I_k}$ and the corresponding collection of $b$-dyadic cubes $Q^k_i$. To see why such a collection of points exists, consider the covering $\{ B(z,c_0 b^k): z\in X \}$ of $X$ and apply the $5B$-covering lemma. By separability of $X$ and by choosing $c_0$ and $C_0$ so that $5c_0 b^k<C_0 b^k$, the existence of centers $\{ z_i^k \}_{i\in I_k}$ is ensured. Given a doubling metric space $X$, we fix such a system of dyadic cubes for the rest of the paper. We also denote by $\mathcal{D}_k$ the collection of all cubes in the fixed system which are of level $k$.

\begin{remark}\label{rem: k+1 level inside k}
    Note that given any $k$-level cube $Q$, there are at most a uniform number of cubes of level $k+1$ contained in $Q$, say $N_d\in \N$. This number $N_d$ solely depends on the doubling constant $C_d$ of $X$ and the constants $b$, $c_0$, $C_0$ of the dyadic system. Indeed, by Theorem~\ref{thm:Dydadic} (iii), for every cube of level $k+1$ inside $Q$, there is a ball of radius $3^{-1}c_0b^{k+1}$ inside $B(Q)$, which is of radius $2C_0 b^k$. Hence, by an application of the doubling property of $X$ we have at most
    $$
    N_d=C_d^{\left(\frac{2C_0b^k}{3^{-1}c_0b^{k+1}}\right) \log_2 C_d}=C_d^{\left(\frac{2C_0}{3^{-1}c_0b}\right) \log_2 C_d}
    $$such balls in $B(Q)$, which is the same bound on the number of $k+1$-level cubes inside $Q$.
\end{remark}

We show that the intermediate dimension can be expressed using the dyadic cube systems from Theorem~\ref{thm:Dydadic}. Note that a similar result in $\R^n$ was recently proved in \cite{FraTyson_intermed_dim} for the usual Euclidean dyadic cube system, although both the statement and the proof  differ from those in the metric setting that are detailed below.
\begin{proposition}\label{prop: dimdyadic}
Let $E\subset X$ be a non-empty subset of a $c_u$-uniformly perfect $C_d$-doubling metric space $X$. For $\theta\in (0,1)$, the $\theta$-intermediate dimension of $E$ is the infimum of the set $A_\theta$ consisting of exponents $s>0$ for which for all $\varepsilon > 0$ there exists $\delta_\varepsilon > 0$ such that for all $\delta\in (0,\delta_\varepsilon)$ there is a cover $\{Q_i\}_{i\in I_\delta}$ of $E$ by dyadic cubes $Q_i$ of level $k_i$ with
	\begin{equation}\label{eq: choice of levels dyadic spectrum}
	\frac{3}{c_u c_0} \, \delta^{1/\theta} \leq b^{k_i} \leq \frac{1}{4C_0} \delta,
		\end{equation}
	for all $i\in I_\delta$, and
	\[
	\sum_i |Q_i|^s < \varepsilon.
	\]
\end{proposition}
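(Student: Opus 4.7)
I would prove the equality by showing $\dim_\theta E \le \inf A_\theta$ and $\inf A_\theta \le \dim_\theta E$ separately, where $A_\theta$ denotes the set in the statement.

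\textbf{Forward inequality.} Take $s \in A_\theta$. Theorem~A(iii) sandwiches every level-$k_i$ dyadic cube:
\[
B(z_i^{k_i},\, c_0 b^{k_i}/3) \subset Q_i \subset B(z_i^{k_i},\, 2 C_0 b^{k_i}).
\]
The outer inclusion forces $|Q_i| \le 4C_0 b^{k_i}$. For $\delta$ small enough that $c_0 b^{k_i}/3 < |X|$, $c_u$-uniform perfectness applied to the inner ball yields $|Q_i| \ge (c_u c_0/3) b^{k_i}$. Substituting the constraint \eqref{eq: choice of levels dyadic spectrum} produces $\delta^{1/\theta} \le |Q_i| \le \delta$, so any cover witnessing $s \in A_\theta$ is simultaneously a $\delta^{1/\theta}$-admissible cover of $E$ with $\sum_i |Q_i|^s < \varepsilon$, which certifies $\dim_\theta E \le s$.

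\textbf{Reverse inequality.} Fix $s > \dim_\theta E$ and $\varepsilon > 0$. For $\delta$ sufficiently small, choose a $\delta^{1/\theta}$-admissible cover $\{U_j\}$ of $E$ with $\sum_j |U_j|^s < \varepsilon/M$, for a constant $M$ depending only on the dyadic, doubling, and uniform perfectness data and specified below. For each $j$, define the clamped scale
\[
\tilde\lambda_j := \min\!\Big(\max\!\big(|U_j|,\ \tfrac{3}{c_u c_0}\delta^{1/\theta}\big),\ \tfrac{b}{4 C_0}\delta\Big),
\]
and let $k_j$ be the unique integer with $\tilde\lambda_j \le b^{k_j} < \tilde\lambda_j/b$. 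A short case analysis over the three regimes of $\tilde\lambda_j$ shows that, once $\delta$ is small enough (using $\theta<1$ so that $\delta^{1/\theta}$ becomes asymptotically negligible relative to $\delta$), the level $k_j$ satisfies \eqref{eq: choice of levels dyadic spectrum}, and moreover $b^{k_j} \le C'' |U_j|$ for a constant $C'' = C''(c_u, c_0, C_0, b)$. Let $\mathcal{Q}_j$ be the collection of level-$k_j$ cubes meeting $U_j$. The $c_0 b^{k_j}$-separation of cube centers together with doubling bounds $|\mathcal{Q}_j|$ by a constant $N = N(C_d, c_0, C_0, b)$, and Theorem~A(iii) gives $|Q| \le 4 C_0 b^{k_j} \le 4 C_0 C'' |U_j|$ for every $Q \in \mathcal{Q}_j$. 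Consequently,
\[
\sum_j \sum_{Q \in \mathcal{Q}_j} |Q|^s \ \le\ N (4 C_0 C'')^s \sum_j |U_j|^s \ <\ \varepsilon
\]
upon setting $M := N(4C_0 C'')^s$. Thus $\bigcup_j \mathcal{Q}_j$ witnesses $s \in A_\theta$, so $\inf A_\theta \le s$, and letting $s \downarrow \dim_\theta E$ finishes the argument.

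\textbf{Main obstacle.} The delicate step is the reverse inequality. The naive prescription $b^{k_j} \simeq |U_j|$ is inconsistent with \eqref{eq: choice of levels dyadic spectrum} at both extremes of $|U_j| \in [\delta^{1/\theta}, \delta]$, since the admissible band for $b^{k_j}$ is strictly narrower than the admissible band for $|U_j|$ by fixed multiplicative factors (involving $c_u c_0$ at the lower end and $4 C_0$ at the upper end). The clamp $\tilde\lambda_j$ absorbs this mismatch in the boundary regimes, and the doubling property is then precisely what guarantees that each boundary $U_j$ is replaced by only boundedly many comparably-sized dyadic cubes, preserving the $s$-sum up to an absolute multiplicative constant.
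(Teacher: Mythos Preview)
Your proof is correct, and the reverse inequality takes a genuinely different route from the paper. The paper chooses $k_i$ via $4C_0 b^{k_i}\le |U_i|<4C_0 b^{k_i-1}$, which yields only $b^{k_i}\ge \frac{b}{4C_0}\delta^{1/\theta}$ rather than the required $b^{k_i}\ge \frac{3}{c_uc_0}\delta^{1/\theta}$. To repair this, the paper introduces an auxiliary parameter $\phi<\theta$ for which the resulting cube cover is admissible, concludes $\inf A_\phi\le s$, and then invokes the continuity of $\theta\mapsto\dim_\theta E$ on $(0,1]$ (a nontrivial result from \cite{Banaji_Int_dim_metric}) to pass to the limit $\phi\to\theta$. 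Your clamping device $\tilde\lambda_j$ simultaneously handles both boundary mismatches and lands $b^{k_j}$ directly in the required window, provided $\delta$ is small enough that $\frac{b}{4C_0}\delta\ge\frac{3}{c_uc_0}\delta^{1/\theta}$; the two-sided comparability $|U_j|\simeq b^{k_j}$ you extract in each of the three regimes then controls both the cardinality of $\mathcal Q_j$ (via doubling) and the cube diameters (via $|Q|\le 4C_0 b^{k_j}\lesssim |U_j|$). This is more elementary and self-contained than the paper's argument, since it avoids appealing to continuity of the intermediate dimensions; the paper's approach, on the other hand, illustrates a perturbation technique that recurs elsewhere in the dimension-interpolation literature.
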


	\begin{proof}
	    
	Note that if $Q_i$ is a cube of level $k_i$, then by Theorem~A (iii) and uniform perfectness of $X$, we have
	\begin{equation*}\label{eq:thmA}
		\frac{c_u c_0}{3} b^{k_i} \leq |Q_i| \leq 4C_0 b^{k_i}.
	\end{equation*}
	So \eqref{eq: choice of levels dyadic spectrum} and the above ensure that 
		$$\delta^{1/\theta} \leq |Q_i| \leq \delta,$$ for all $i\in I_\delta$. 
    As a result, trivially $\dim_\theta E \leq \inf A_\theta$.
	
	Fix $s > \dim_\theta E$. We will show $s \geq \inf A_{\phi}$ for values $\phi<\theta$ as close to $\theta$ as desired. Then, by letting $s \to \dim_\theta E$, and using $\dim_{\phi} E \leq \inf A_{\phi}$, taking $\phi \to \theta$ and using the intermediate dimension's continuity yields the needed characterization.
        
        For $\varepsilon > 0$, there is $ \delta_\varepsilon = \delta_\varepsilon(s) > 0$ such that for all $ \delta < \delta_\varepsilon$
        there is a cover $\{U_i\}$ of $E$ with
        \begin{equation}\label{eq: Ui for dimθ}
        \delta^{1/\theta}\leq |U_i| \leq \delta
        \end{equation}
        and
        \begin{equation}\label{eq: sum Ui for dimθ}
        \sum |U_i|^s < \varepsilon.
        \end{equation}
        
        Let $k_i$ be the unique integer such that 
        \begin{equation}\label{eq: Ui compared bki}
        4C_0  b^{k_i} \leq |U_i| < 4C_0  b^{k_i-1}.
        \end{equation}
        
        \noindent
        \textit{Claim:} For every  $i\in I_\delta$, $U_i$ can be covered by at most
        \[
        N = C_d \Big( \frac{12C_0 b^{-1} + 24C_0}{c_0} \Big)^{\log_2 C_d}
        \]
        cubes of level $k_i$.
        
        \begin{claimproof}[Proof of Claim]
            For the rest of the proof of this Claim, we fix $i\in I_\delta$, and set $k=k_i$, $U=U_i$ to ease the notation.
        
        Set
        \[
        J^U_k = \{ j \in J_k : Q_j^k \cap U \neq \emptyset \}
        \] to be the set of indices of $k$-level cubes that intersect the set $U$.
        Moreover, set
        \[
        Q_U = \bigcup_{j \in J^U_k} Q_j^k, 
        \]
        to be the largest collection of $k$-level cubes that intersects and covers $U$.
        
        Let $x_0 \in Q_U$. Suppose $x_0 \in Q_U \setminus U$. Then there is $j_0 \in J^U_k$ such that 
        \[
        x_0 \in Q_{j_0}^k, \quad\text{and} \quad x_0' \in Q_{j_0}^k \cap U.
        \]
        Let $x \in Q_U$. There are two cases to consider:
        
        \textit{Case 1:} If $x \in U$, then
        \begin{align*}
        d(x_0,x) 
        &\leq d(x_0,x_0') + d(x_0',x) \\
        &\leq |Q_{j_0}^k| + |U| \\
        &\leq 4C_0 b^k + (4C_0)b^{k-1} \\
        &= (4C_0 b^{-1} + 4C_0)b^k.
        \end{align*}
        
        \textit{Case 2:} If $x \in Q_U \setminus U$, then there is $ j_x \in J^U_k$ such that 
        \[
        x \in Q_{j_x}^k, \quad x' \in Q_{j_x}^k \cap U.
        \]
        
        Then,
        \begin{align*}
            d(x_0,x) 
            &\leq d(x_0,x_0') + d(x_0',x') + d(x',x) \\
            &\leq |Q_{j_0}^k| + |U| + |Q_{j_x}^k| \\
            &\leq 4C_0 b^k + |U| + 4C_0 b^k \\
            &\leq (4C_0 b^{-1} + 8C_0)b^k.
        \end{align*}
        
        All the above was under the assumption that the arbitrary point $x_0\in Q_U$ is not contained in $U$. Similarly, if $x_0 \in U$, it can be shown that
        \[
        d(x_0,x) \leq (4C_0 b^{-1} + 4C_0)b^k, 
        \] for all $x \in Q_U$.
        As a result,
        \[
        d(x_0,x) \leq (4C_0 b^{-1} + 8C_0)b^k,
        \]for all $x_0,x \in Q_U$,
        implying that 
        $$|Q_U| \leq (4C_0 b^{-1} + 8C_0)b^k.$$
        
        Thus, there is some $y \in Q_U$ such that  $Q_U \subset B(y,(4C_0 b^{-1} + 8C_0)b^k)$. For simplicity, we denote this ball by $B$.
        Note that every cube in $Q_U$, by Theorem~A (iii), contains a ball of radius $\tfrac{c_0b^k}{3}$, and all these balls lie in $B$ and are disjoint. Hence, by the doubling condition of $X$, there are at most 
        \[N= C_d^{\frac{(4C_0 b^{-1} + 8C_0)b^k}{c_0b^k/3}\log_2 C_d}
        \] such balls of radius $\tfrac{c_0b^k}{3}$ inside $B$, which implies that there are at most this many $k$-level cubes to cover $U$.
        
        \end{claimproof} 
        
        For every set $U_i$, there is a collection of $k_i$-level cubes $\{Q_{j}^{k_i}\}_{j\in J_i}$ that cover $U_i$. We use the cubes $\mathcal{Q_\delta}:=\{Q_{j}^{k_i}\}_{\substack{j \in J_i \\ i \in I_\delta}}$ to cover $E$.
        
        Note that, by \eqref{eq: Ui compared bki},  we have for all $i\in I_\delta$ that
        \begin{equation}\label{eq: cube admiss}
        \frac{b \,\delta^{1/\theta}}{4C_0}\leq b \,\frac{|U_i|}{4C_0}\leq b^{k_i} \leq \frac{|U_i|}{4C_0} \leq \frac{\delta}{4C_0},
        \end{equation}
        
        Because of \eqref{eq: cube admiss}, the cover $\mathcal{Q_\delta}$ might not be admissible for the conditions of $A_\theta$. Thus, we need to choose a $\phi$ for which $b^{k_i}$ satisfies the appropriate lower bound for $\mathcal{Q_\delta}$ to be an admissible collection for the conditions in $A_{\phi}$.
        Set
        \begin{equation}
        \phi := \frac{\theta\log \delta_\varepsilon}{\log \delta_\varepsilon + \theta \log \frac{b\, c_u\, c_0}{12C_0}},
        \label{eq: new θ ie phi}
        \end{equation}
        and note that $\phi < \theta$.
        With this choice we have 
        $$
        \delta_\eps^{\frac1\phi - \frac1\theta} = \frac{b c_uc_0}{12C_0},
        $$
        which ensures that for all $\delta \leq \delta_\eps$ we have
        \[
        \frac{3}{c_uc_0}\delta^{1/\phi} \leq \frac{b}{4C_0}\delta^{1/\theta}.
        \]
        Thus, \eqref{eq: cube admiss}, \eqref{eq: new θ ie phi} imply 
        $$
        \frac{3}{c_uc_0}\delta^{1/\phi}\leq b^{k_i}  \leq \frac{\delta}{4C_0},
        $$
         which makes the collection $\{Q_i^{k_i}\}$ admissible for $\inf A_{\phi}$.
        
        Also, due to the Claim,
        \[
        \sum_{i,j} |Q_{j}^{k_i}|^s \leq N \; \sum_i |U_i|^s < N \; \varepsilon := \varepsilon'.
        \]
        
        As a result, because $\phi \to \theta$ as $\delta_\varepsilon \to 0$, we have shown that for every tiny $\lambda > 0$, for every $\varepsilon' > 0$, there is $\delta_{\varepsilon'}'$ small enough (as dictated by \eqref{eq: new θ ie phi} and the value $\delta_{N^{-1}\varepsilon'}$) such that for all $\delta \in (0,\delta'_{\varepsilon'})$, there is a cover $\{Q_i\}_{i\in I_\delta}$ of  $E$ by cubes of level $k_i$ with  
        $$\frac{3}{c_uc_0}\delta^{1/(\theta+\lambda)} \leq b^{k_i} \leq \frac{1}{4C_0}\delta,
        $$
        so that
        \[
        \sum_i |Q_i|^s < \varepsilon.
        \]
        
        This implies
        \[
        \dim_{\theta+\lambda} E\leq \inf A_{\theta+\lambda} \leq s.
        \]
        
        Since $s > \dim_\theta E$ was arbitrary, we let $s\rightarrow \dim_\theta E$, and then letting $\lambda \to 0$, by continuity of intermediate dimensions, completes the proof.
    \end{proof} 

\begin{remark}
The construction of dyadic cubes in \cite{Hyt:dyadic} was actually given for quasimetric spaces. As a result, Proposition~\ref{prop: dimdyadic} is also true if $X$ is a quasimetric doubling space. The proof is almost identical, with the only difference being the dependence of a few of the constants on the quasimetric constant of the space.
\end{remark}

\subsection{Mappings between metric spaces}\label{subsec:BackSobolev}
Let $(X,d_X)$ and $(Y,d_Y)$ be two metric spaces.
Given $\alpha\in (0,\infty)$, a mapping $f:X\rightarrow Y$ and a set $B\subset X$, we  define the \textit{$\alpha$-H\"older coefficient} of $f$ on $B$ as
$$
|f|_{\alpha, B}:= \sup\left\{ \frac{d_Y(f(x),f(y))}{[d_X(x,y)]^\alpha}: \, x, y \in B \,\, \text{distinct} \right\}.
$$ If $|f|_{\alpha, B}<\infty$ then we say that $f$ is \textit{$\alpha$-H\"older continuous} in $B$. 

Given an at most countable index set $I$, we denote by $\ell^p(I)$, $p\in (0,\infty)$, the space of real-valued sequences $\{c_i\}_{i\in I}$ with finite $p$-norm $(\sum_{i\in I} c_i^p)^{1/p}<\infty$. We call $\sum_{i\in I} c_i^p$ the \textit{$p$-sum} of the sequence $\{c_i\}_{i\in \N}$.

For the rest of the paper, all index sets are assumed to be at most countable. We now recall the class of compactly H\"older mappings.

\begin{definition}\label{def: CH maps}
    Let $f:X\rightarrow Y$ be a mapping between two arbitrary metric spaces. For $p, \alpha\in(0,\infty)$, we say $f$ is \textit{$(p,\alpha)$-compactly H\"older}, and write $f\in CH^{p,\alpha}(X:Y)$, if for any compact set $E\subset X$ and any $\eps\in (0,1)$ there are $r_E>0$ and $C_E>0$  satisfying the following: \\
if $\{B_i\}_{i\in I}$ is a collection of balls $B_i:=B(x_i,r_i)$ with $x_i\in X$, $r_i<r_E$ that covers $E$ and $B(x_i,\eps r_i)\cap B(x_j,\eps r_j)=\emptyset$ for all distinct $i, j\in I$, then the $p$-sum of the H\"older coefficients of $f$ on $B_i$ is at most $C_E$, i.e.,
\begin{equation}\label{eq: CH-def-inequality}
    \sum\limits_{i\in I} |f|_{\alpha, B_i}^p\leq C_E.
\end{equation}
\end{definition}

Here we follow the convention that if $\{B(x_i,r_i)\}_{i\in I}$ covers $E$, it is implied that $B(x_i,r_i)\cap E\neq\emptyset$ for all $i$, but not all $x_i$  necessarily lie in $E$. Note that  applying the definition on singleton sets yields that compactly H\"older mappings are H\"older continuous on compact sets. Moreover, in the setting of Definition~\ref{def: CH maps} it is actually implied by \eqref{eq: CH-def-inequality} that there are $C_i>0$ such that
\begin{equation}\label{eq: CH-inequality}
    |f(B(x_i,r_i)|\leq C_i |B(x_i,r_i)|^\alpha
\end{equation} with $\sum_{i\in I}C_i^p\leq C_E$. This inequality 
provides insight on the relation with the Euclidean setting. More specifically, it hints at how the motivation for Definition~\ref{def: CH maps} comes from continuous super-critical Sobolev maps between Euclidean spaces, i.e. continuous maps in $W^{1,p}(\Omega;\R^n)$ with $\Omega\subset \R^n$ and $p>n$. See also \cite{Chron_comp_holder_Minkowski}.
\begin{remark}\label{rem: old def}
    In \cite{Chron_comp_holder_Minkowski} the class of compactly H\"older maps was initially defined with $\{B_i\}$ being a cover where all balls are of the same radius $r$ (see \cite[Definition 2.3]{Chron_comp_holder_Minkowski}). While that condition is enough to determine the distortion of the Minkowski dimension, all types of Sobolev maps considered in this manuscript and in \cite{Chron_comp_holder_Minkowski} satisfy the stronger condition with potentially distinct radii $r_i$. Thus, we decided to update the definition to require this stronger property, especially since the motivation is to appropriately generalize these mapping classes to a solely metric setting (with no measure). It should be noted that all results in \cite{Chron_comp_holder_Minkowski} stated for the ``older'' and wider class of compactly-H\"older mappings also hold for the notion in Definition~\ref{def: CH maps}.
\end{remark}

We now turn to discussing fractionally smooth mappings in the metric setting, which requires a measure. A triplet $(X,d_X,\mu)$ is called a metric measure space  if $(X,d_X)$ is a metric space and $\mu$ is a Borel measure on $X$ that assigns a strictly positive and finite value on all balls in $X$. Thus, throughout the paper all measures are considered to have the aforementioned properties, even if not stated explicitly. Note that every metric measure space is necessarily separable (see \cite{Gorka21}). For $p\in (0,\infty]$ we denote the space of \textit{$p$-integrable} real-valued functions defined on $X$ by $L^p(X,\mu)$, or simply by $L^p(X)$ if the measure follows from the context, and by $L^p_\loc(X)$ the space of locally $p$-integrable real-valued functions defined on $X$. Moreover, for a ball $B\subset X$ and $u\in L^1(B)$ we denote by $u_B$ the average of $u$ over $B$, i.e., $u_B:= \Barint_{B}u\, d\mu = \mu(B)^{-1} \int_{B} u\, d\mu$.

Let $(X,d_X,\mu)$ be a metric measure space, $(Y,d_Y)$ be a metric space equipped with the Borel sigma-algebra, and $s\in(0,\infty)$.
Following \cite{HajSob,hajlasz,Y03},
a measurable function  $g:X\to[0,\infty]$  is called
an \textit{$s$-gradient} of a measurable function $u\colon  X\rightarrow Y$ if
there exists a set $E\subset X$ with $\mu(E)=0$ such that
\begin{equation}
\label{fracHajlasz}
d_Y(u(x),u(y))\leq [d_X(x,y)]^s\left[g(x)+g(y)\right],
\end{equation}
for every $x,y\in X\setminus E$.
The collection of all the $s$-gradients of $u$ is denoted by $D^s(u)$.
Given $p\in(0,\infty)$, the \textit{(homogeneous) fractional Haj\l asz--Sobolev space}
$\dot{M}^{s,p}(X:Y)$ is defined as the collection of
all the measurable functions $u\colon  X\rightarrow Y$ such that
\begin{equation*}
\Vert u\Vert_{\dot{M}^{s,p}(X:Y)}:=
\inf_{g\in D^s(u)}\Vert g\Vert_{L^p(X)}<\infty.
\end{equation*}
Here and thereafter, we make the agreement that for semi-norms $\inf\emptyset:=\infty$.

In order to define the Haj\l asz--Triebel--Lizorkin and Haj\l asz--Besov  spaces,
we need a suitable notion for the gradient.
Following \cite{KYZ11}, a sequence $\{g_k\}_{k\in\mathbb{Z}}$ of measurable
functions $g_k:X\to[0,\infty]$ is called a \textit{fractional $s$-gradient} of
a measurable function $u\colon X\rightarrow Y$ if there exists
a set $E\subset X$ with $\mu(E)=0$ such that
\begin{equation}\label{Hajlasz}
d_Y(u(x),u(y))\leq [d_X(x,y)]^s \left[g_k(x)+g_k(y)\right]
\end{equation}
for any $k\in\mathbb{Z}$ and  $x,y\in X\setminus E$ satisfying $2^{-k-1}\leq d_X(x,y)<2^{-k}.$
Let $\mathbb{D}^s(u)$ denote the set of all the fractional $s$-gradients of $u$.
Given $p\in(0,\infty)$, $q\in(0,\infty]$, and a sequence  $\vec{g}:=\{g_k\}_{k\in\mathbb{Z}}$
of measurable functions $g_k:X\to[0,\infty]$, define
\begin{equation*}
\Vert \vec{g}\Vert_{L^p(X,\ell^q)}:=
\lf\Vert\, \Vert \{g_k\}_{k\in\mathbb{Z}}\Vert_{\ell^q} \r\Vert_{L^p(X)}
\end{equation*}
and
\begin{equation*}
\Vert \vec{g}\Vert_{\ell^q(L^p(X))}:=\lf\Vert \lf\{\Vert g_k\Vert_{L^p(X)}\r\}_{k\in\mathbb{Z}} \r\Vert_{\ell^q},
\end{equation*}
where
\begin{equation*}
\Vert \{g_k\}_{k\in\mathbb{Z}}\Vert_{\ell^q}:=
\begin{cases}
 \displaystyle\left(\sum_{k\in\mathbb{Z}}\vert g_k\vert^q\right)^{1/q}& ~\text{if}~q\in(0,\infty),\\
 \displaystyle \sup_{k\in\mathbb{Z}}\vert g_k\vert& ~\text{if}~q=\infty.
\end{cases}
\end{equation*}
Then the
\textit{(homogeneous) Haj\l asz--Triebel--Lizorkin space}
$\dot{M}^s_{p,q}(X:Y)$ is defined as the
collection of all the measurable mappings  $u\colon  X\rightarrow Y$ such that the semi-norm
\begin{equation*}
\Vert u\Vert_{\dot{M}^s_{p,q}(X:Y)}:=\inf_{\vec{g}\in
\mathbb{D}^s(u)}\Vert\vec{g}\Vert_{L^p(X,\ell^q)}<\infty.
\end{equation*}
The \textit{(homogeneous) Haj\l asz--Besov space} $\dot{N}^s_{p,q}(X:Y)$
is defined as the collection of all the measurable mappings
$u\colon  X\rightarrow Y$ such that the semi-norm
\begin{equation*}
\Vert u\Vert_{\dot{N}^s_{p,q}(X:Y)}:=
\inf_{\vec{g}\in\mathbb{D}^s(u)}\Vert\vec{g}\Vert_{\ell^q(L^p(X))}<\infty.
\end{equation*}

A few comments are in order. We use the term `semi-norms' for $\Vert u\Vert_{\dot{M}^{s,p}(X:Y)}$, $\Vert \cdot\Vert_{\dot{M}^s_{p,q}(X:Y)}$, and $\Vert u\Vert_{\dot{N}^s_{p,q}(X:Y)}$ even though  the triangle inequality holds only when $p,q\geq1$.
A genuine `norm' can be obtained by passing to the quotient space modulo constant functions.
Since altering functions on sets of measure zero do not affect their
membership in $\dot{M}^{s,p}(X:Y)$, $\dot{M}^s_{p,q}(X:Y)$, or $\dot{N}^s_{p,q}(X:Y)$, it is standard to regard these spaces as consisting of equivalence classes of functions. We adopt this convention here, but we choose to omit the details.

When $Y=\mathbb{R}$, we make the following abbreviations:
$\dot{M}^{s,p}(X):=\dot{M}^{s,p}(X:\mathbb{R})$
$\dot{M}^s_{p,q}(X):=\dot{M}^s_{p,q}(X:\mathbb{R})$, and $\dot{N}^s_{p,q}(X):=\dot{N}^s_{p,q}(X:\mathbb{R})$. 
It was shown in \cite{KYZ11} that $\dot{M}^s_{p,q}(\mathbb{R}^n)$ coincides with the classical Triebel--Lizorkin space $\dot{F}^s_{p,q}(\mathbb{R}^n)$ for any $s\in(0,1)$, $p\in(\frac{n}{n+s},\infty)$, and $q\in(\frac{n}{n+s},\infty]$,  and $\dot{N}^s_{p,q}(\mathbb{R}^n)$ coincides with the classical Besov space $\dot{B}^s_{p,q}(\mathbb{R}^n)$ for any $s\in(0,1)$, $p\in(\frac{n}{n+s},\infty)$, and $q\in(0,\infty]$. In particular, the Haj\l{}asz--Triebel--Lizorkin and Haj\l{}asz--Besov spaces on $\mathbb{R}^n$ contain the classical fractional Sobolev spaces as special cases (see \cite[Chapter~2]{GrafModMucken}).
When $s=1$, we have that $\dot{M}^1_{p,\infty}(\mathbb{R}^n)=\dot{M}^{1,p}(\mathbb{R}^n)$ coincides with the classical Sobolev space $\dot{W}^{1,p}(\mathbb{R}^n)$ for any $p\in(1,\infty)$; see Lemma~\ref{sobequal} and \cite{hajlasz}. We refer the reader to
\cite{HajSob,hajlasz,Y03,KYZ11,HKT07,agh20,hhhpl21,AWYY21,AGS25} for more information on Sobolev, Triebel--Lizorkin, and Besov spaces on metric measure spaces.

The following notions for measures are also typically assumed in this setting. We say a metric measure space $(X,d,\mu)$ is \textit{locally $Q$-homogeneous}, for some $Q>0$, if for all compact $K\subset X$ there are constants $\tilde{R}_{\text{hom}}(K)>0$, $\tilde{C}_{\text{hom}}(K)\geq 1$ such that
\begin{equation}\label{Qhom}
\frac{\mu(B(x,r_2))}{\mu(B(x,r_1))}\leq \tilde{C}_{\text{hom}}(K) \left(\frac{r_2}{r_1}\right)^Q,
\end{equation}
for all $x\in K$ and scales $0<r_1<r_2<\tilde{R}_{\text{hom}}(K)$. Note that local $Q$-homogeneity implies that \((X, d, \mu)\) is \textit{locally doubling} (see\cite{BaloghAGMS}), i.e., for every compact subset $K \subseteq X$, there exists a radius $R > 0$ and a constant $C \geq 1$ such that
$$
\mu(B(x, 2r)) \leq C \, \mu(B(x, r))
$$
whenever \(B(x, r)\) is a ball centered at a point in \(K\) with \(r \leq \tilde{R}_{\text{hom}}(K)\).
We say a metric space $(X,d)$ is \textit{locally $Q$-homogeneous} if there is a measure $\mu$ on $X$ such that $(X,d,\mu)$ is locally $Q$-homogeneous. One particular property due to local homogeneity that we  need is the lower bound on the measure of a ball by its radius to a power. More specifically, for $R_{\text{hom}}(K)=\tilde{R}_{\text{hom}}(K)/3$ and a potentially larger $C_{\text{hom}}(K)$, it can be shown that 
\begin{equation}\label{eq:lower-AR}
    \frac{r^Q}{C_{\text{hom}}(K)} \leq \mu (B(x,r)),
\end{equation} for all $x\in K$ and $r\in (0,R_{\text{hom}}(K))$.

It can be necessary at times to also have a similar upper bound on the measure, and especially in the study of quasisymmetric mappings. We say that $(X,d,\mu)$ is \textit{$Q$-Ahlfors regular} for some $Q>0$ if there is a constant $C_A>0$ such that for all $x\in X$ and all $r\in(0,|X|)$ we have
    $$
    \frac{1}{C_A}r^Q \leq \mu (B(x,r)) \leq C_A r^Q.
    $$ 
    We say a metric space $(X,d)$ is \textit{$Q$-Ahlfors regular} if there is a measure $\mu$ on $X$ such that $(X,d,\mu)$  is $Q$-Ahlfors regular. Note that $Q$-regularity of a measure implies the $Q$-homogeneous property.

\section{Intermediate dimensions under compactly H\"older mappings}\label{sec:DimDistProofs}

Suppose $(Y,d_Y)$ is a $c_u'$-uniformly perfect metric space and $(X,d_X)$ is a $c_{u}$-uniformly perfect, $C_d$-doubling 
metric space with a fixed system of dyadic cubes as in Section~\ref{sec:background}. We further assume that both $X$ and $Y$ have more than one point each, as otherwise all results trivially hold. 
Before we proceed, we need the following elementary property of uniformly perfect spaces.
		\begin{lemma}\label{lem: unif perfect L(Ui)} 
        If $Y$ is $c_u'$-uniformly perfect, then for every $F\subset Y$ and every $M>0$ with $|F|<M<M c_u'^{-1}<|Y|$,
		there is a set $L(F)$ containing $F$ with
		\[
		M \leq |L(F)| \leq 2c_u'^{-1} M.
		\]
        \end{lemma}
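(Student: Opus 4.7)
The plan is to enlarge $F$ by a single strategically placed point produced via uniform perfectness. The diameter constraint $|F|<M$ tells us $F$ alone is too small, while $Mc_u'^{-1}<|Y|$ is precisely the condition we need to legally invoke uniform perfectness at a radius which will force the new point far enough away from $F$.

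First I would fix any base point $y_0\in F$ (the empty-$F$ case is handled by picking $y_0$ anywhere in $Y$; this case is anyway irrelevant to the applications later in the paper). Setting $r:=Mc_u'^{-1}$, the hypothesis $Mc_u'^{-1}<|Y|$ gives $r<|Y|$, so uniform perfectness of $Y$ produces a point $y'\in B(y_0,r)\setminus B(y_0,c_u' r)$; by construction this means
\[
M=c_u' r \le d_Y(y_0,y')<r=Mc_u'^{-1}.
\]
Then I would simply set $L(F):=F\cup\{y'\}$, which manifestly contains $F$.

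It remains to check the two diameter estimates. For the lower bound, $|L(F)|\ge d_Y(y_0,y')\ge M$ since $y_0\in F\subset L(F)$ and $y'\in L(F)$. For the upper bound, any two points $y,z\in L(F)$ either both lie in $F$ (giving $d_Y(y,z)\le|F|<M<2c_u'^{-1}M$), or one of them equals $y'$ while the other, say $y\in F$, satisfies
\[
d_Y(y,y')\le d_Y(y,y_0)+d_Y(y_0,y')<|F|+Mc_u'^{-1}<M+Mc_u'^{-1}\le 2c_u'^{-1}M,
\]
where the last inequality uses $c_u'\in(0,1)$, hence $M\le Mc_u'^{-1}$. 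Taking the supremum over pairs yields $|L(F)|\le 2c_u'^{-1}M$, which completes the proof.

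There is no real obstacle here; the only subtlety is matching the radius in the uniform-perfectness application to exactly the constant $M$ required as a lower bound on $|L(F)|$, which forces the choice $r=Mc_u'^{-1}$ and, in turn, explains why the two hypotheses on $M$ appear in the statement.
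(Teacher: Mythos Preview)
Your proof is correct and uses the same key step as the paper: invoke uniform perfectness at a point of $F$ with radius $r=Mc_u'^{-1}$ to produce a point at distance at least $M$ from $F$. The only cosmetic difference is that the paper takes $L(F)=B_Y(y_0,Mc_u'^{-1})$ (the whole ball), whereas you take the minimal enlargement $L(F)=F\cup\{y'\}$; the diameter estimates are verified by the same inequalities either way.
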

		
		\begin{proof}
		    Let $y\in F$, then $B_Y(y, M c_u'^{-1})$ contains $F$, and
		\[
		|B_Y(y,M c_u'^{-1})| \geq c_u' c_u'^{-1} M = M
		\]
		and
		\[
		|B_Y(y,M c_u'^{-1})| \leq 2M c_u'^{-1}.
		\]
		Hence, $L(F)=B_Y(y, M c_u'^{-1})$ is the desired set.
		\end{proof}

Since all mappings considered henceforth are  defined on $X$ and into $Y$, we set $CH^{p,\alpha}=CH^{p,\alpha}(X:Y)$. Let $f\in CH^{p,\alpha}$ for $p\in (1,\infty)$, $\alpha>0$ and $E\subset X$ be a bounded set. 
We plan on using coverings of $E$ by dyadic cubes, provided by Proposition~\ref{prop: dimdyadic}, and determine  admissible coverings for $\dim_\theta f(E)$ based on those of $E$ and shrinking properties of $f$ controlled by inequality \eqref{eq: CH-inequality}. 
Before we delve into the proof, we give an  outline of the combinatorial part of our method.
The main idea is to fix an arbitrary $d>\dim_\theta E$, and for every $\varepsilon>0$ fix for every $\delta\in (0,\delta_\varepsilon)$ a dyadic covering $\mathcal{Q}_\delta$ of $E$ as in Proposition~\ref{prop: dimdyadic}. The images of the covering cubes form a cover of $f(E)$. After determining a $\delta_Y$ to use for $\dim_\theta f(E)$ (see \eqref{eq: δ in f(E)}), we need to modify the cubes in order for their images to yield a $\delta_Y^{1/\theta}$-admissible cover of $f(E)$. To do so, we build a combinatorial graph with cubes as vertices, in the following manner (see Figure~\ref{fig: Graph}): 
\begin{itemize}
    \item Let $\{m_1, \dots, m_{N_\delta}\}$ be the set of all levels of cubes in $\mathcal{Q}_\delta$, with $m_i<m_j$ for all $i<j$. List all $m_1$-level cubes in $\mathcal{Q}_\delta$ on the first row, all $m_2$-level cubes in $\mathcal{Q}_\delta$ $m_2-m_1$ rows lower, all $m_3$-level cubes in $\mathcal{Q}_\delta$ $m_3-m_2$ rows lower and so on, until the highest level $m_{N_\delta}$ for cubes in $\mathcal{Q}_\delta$.
    \item Color all cubes whose image under $f$ has diameter larger than $\delta_Y$ red, those with images of diameter less than $\delta_Y^{1/\theta}$ green, and the remaining blue.
    \item Subdivide every red cube, based on Theorem~\ref{thm:Dydadic}, into next level sub-cubes, and draw in the next row from their ancestor those that intersect $E$, and connect them with  edges to their ancestor. These descendant cubes could be red, blue, or green, depending on the behavior of $f$.
    \item Iterate the previous step, until we reach the first level $m_{K_\delta}\geq m_{N_\delta}$ with no red cubes. This is guaranteed by uniform continuity of $f$.
\end{itemize}

		\begin{figure}
			\centering
            \includegraphics{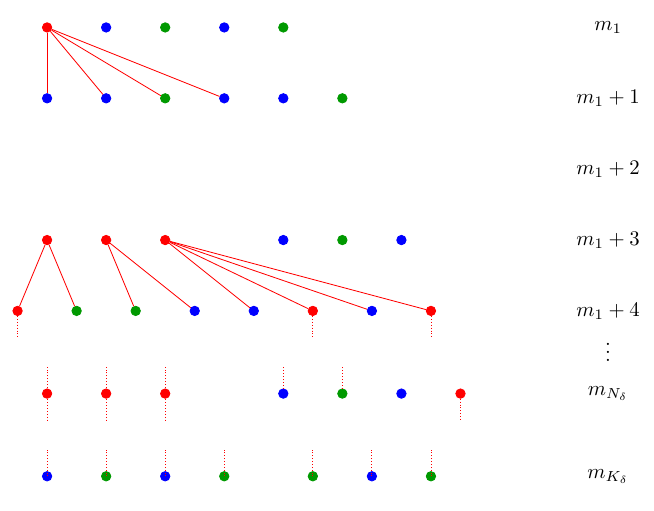}
			\caption{The graph $\mathcal{G}(\mathcal{Q}_\delta)$ above includes a few noteworthy characteristics of $\mathcal{Q}_\delta$ and $f$. It is depicted that $\mathcal{Q}_\delta$ has no cubes of level $m_1+2$, and while it has no cubes of level $m_1+4$ either, we need to subdivide red cubes to and past that level for the induced desired covering of $f(E)$. Moreover, there are red cubes of $\mathcal{Q}_\delta$ even at the very last level, which means that $f$ is capable of increasing the diameter of even these small cubes, creating the need to subdivide beyond the $m_{N_\delta}$ level.}\label{fig: Graph}
		\end{figure}
        Based on the above graph, the strategy then is to use a covering of $f(E)$ consisting of images of all blue cubes, and ``enlarged'' images of green cubes (using Lemma~\ref{lem: unif perfect L(Ui)}). This by construction is a $\delta_Y^{1/\theta}$-admissible covering of $f(E)$, with the least cardinality possible among coverings based on $\mathcal{Q}_\delta$, and with a combinatorial representation that facilitates estimates on the sum of respective diameters to an appropriate exponent, achieving the desired upper bound on $\dim_\theta f(E)$. With this strategy in mind, we are ready to delve into the proof of Theorem~\ref{thm:main_Holder}.

\begin{proof}[Proof of Theorem~\ref{thm:main_Holder}]
		Fix $\theta\in (0,1)$. We focus on the case $\dim_\theta E=d_E(\theta)<pd_E(\theta)(\alpha p+d_E(\theta))^{-1}$, and the other case can be treated similarly. Suppose $d_E := d_E(\theta) < d < d' < p-\alpha p$, and set 
        $D=\tfrac{p{d'}}{\alpha p + d'}$. The choice $d<d' < p-\alpha p$ guarantees that  $d \neq D$, but the proof is similar in the case $d_E(\theta)\geq pd_E(\theta)(\alpha p+d_E(\theta))^{-1}$, 
		by choosing arbitrary $D>d'>d>d_E$.

		Due to $d>\dim_\theta E$ and Proposition~\ref{prop: dimdyadic}, for $\varepsilon>0$ there is $\delta_\varepsilon\in (0,1)$, such that for all
		$ \delta\in(0,\delta_\varepsilon)$ there is a covering $ \mathcal{Q_\delta}:=\{Q_i^{k_i}\}_{i\in I_\delta}$ of  $E$ by dyadic cubes
		such that 
		\begin{equation}\label{eq:1}
			\frac{3}{c_uc_0}\,\delta^{1/\theta} \leq b^{k_i} \leq \frac{1}{4C_0}\,\delta,
		\end{equation}		
		for all $i\in I_\delta$, and
		\begin{equation}\label{eq:2}
			\sum_{i\in I_\delta} |Q_i^{k_i}|^d < \varepsilon.
		\end{equation}
		Without loss of generality, we may assume that $\delta_\varepsilon$ is small enough for certain properties to apply. Namely, we assume that 
        \begin{itemize}
            \item $\delta_\varepsilon < \varepsilon < 1$, in order to replace $\delta$ by $\varepsilon$ when all that matters is for a quantity to be small,
            \item $\delta_\varepsilon<r_E$, to ensure that \eqref{eq: CH-inequality} can be applied to the corresponding balls $B(Q_i^{k_i})$ from Theorem~\ref{thm:Dydadic} (iii) for the cubes in $\mathcal{Q}_\delta$, and all their sub-cubes,
            \item $\delta_\varepsilon<2^{-1}\min\{|X|, (C_E^{-1}|Y|)^{1/\alpha}\}$, to ensure that the uniformly perfect properties of $X$ and $Y$ can be applied to balls $B(Q_i^{k_i})$ in $X$, and balls of radius $|f(Q_i^{k_i})|$ in $Y$, respectively.
            \item $\delta_\varepsilon<(2^{-1}|Y|/c_u')^{D/d}$, to ensure that the uniformly perfect property of $Y$ can be applied to balls of radius $\delta^{d/(D\theta)}/c_u'$ in $Y$ (i.e., Lemma~\ref{lem: unif perfect L(Ui)} applies for sets $F$ with $|F|<M=\de^{d/(D\theta)}$).
        \end{itemize} Without loss of generality also assume $k_1\leq k_2\leq \;s \leq k_{N_\delta}$. Moreover, if $Q_{i'}^{k_{i'}}\subset Q_i^{k_i}$ for $i'\neq i\in I_\delta$,
		then we can reduce $\{Q_i^{k_i}\}_{i\in I_\delta}$ to $\{Q_i^{k_i}\}_{i\in I_\delta\setminus\{i'\}}$, and \eqref{eq:1}, \eqref{eq:2}
		would still be true for $i\in I_\delta\setminus\{i'\}$, since $Q_{i'}^{k_{i'}}$ would be redundant for the covering. As a result, we may assume that if $Q_i^{k_i}$ is a cube in the cover
		$\mathcal{Q_\delta}$, then no sub-cube of $Q_i^{k_i}$
		is contained in $\mathcal{Q_\delta}$. Note that $k_i$’s are not necessarily
		pairwise distinct, since, for instance, there could be
		two level $m=k_1=k_2$ cubes in $\mathcal{Q_\delta}$. We  consider a re-labeling of
		the cubes in $\mathcal{Q_\delta}$ to account for that, namely,
		set 
        $$
        I_\delta' := \{k_i: i\in I_\delta\},
        $$ which can be represented
		as $I_\delta' = \{m_1,m_2,\dots,m_{N_\delta}\}$, with positive integers $m_i$, with $m_i<m_j$ for all $i<j$.
		This notation helps by clarifying exactly which level we address in following arguments. We also set
		\begin{equation}\label{eq: δ in f(E)}
			\delta_Y := \delta^{d/D}.
		\end{equation}

		We say a cube $Q$ that intersects $E$ and is contained in some $Q_i^{k_i}\in \mathcal{Q}_\delta$, for some $i\in I_\delta$,  is
		\textit{blue} if $|f(Q)|\in[\delta_Y^{1/\theta},\delta_Y)$, \textit{green} if $|f(Q)|< \delta_Y^{1/\theta}$,
		and \textit{red} otherwise. Notice that the terminology
		applies not only to cubes in $\mathcal{Q_\delta}$, but also
		to their sub-cubes. In addition, it applies \textit{only} to the aforementioned types of cubes, due to our initial reduction on $\mathcal{Q}_\de$ and by Theorem~\ref{thm:Dydadic} (i). By uniform continuity of $f$,
		it is guaranteed that past a certain level, all
		sub-cubes of cubes in $\mathcal{Q_\delta}$ will be  blue
		or green. The strategy is to sub-divide every
		red cube in $\mathcal{Q_\delta}$ and all their red sub-cubes the least number of times
		necessary, in order for the resulting cubes that intersect $E$ to all be
		blue or green. Then, apply  Lemma~\ref{lem: unif perfect L(Ui)} to the images of all green cubes under $f$, in order to ``enlarge'' them, and pick a covering of $f(E)$ consisting of these enlarged sets and the images of all blue
		cubes under $f$. By construction, this covering is a $\delta_Y^{1/\theta}$-admissible covering for
		$\dim_\theta f(E)$. Then, it would be enough to show that
		the sum of their diameters raised to $D$ is small enough.
		
		Suppose $m_{K_\delta} \geq m_{N_\delta}$ is the smallest integer
		such that all cubes of level $m_{K_\delta}$ which intersect $E$ and
		are contained in $\bigcup_{i\in I_\delta} Q_i^{k_i}$, are either green or blue.
		Note that $m_{K_\delta}$ could be strictly larger than
		$m_{N_\delta}$, because we can have red $m_{N_\delta}$-level
		cubes in $\mathcal{Q_\delta}$, or red cubes in $\mathcal{Q_\delta}$ might need
		to be subdivided past the level $m_{N_\delta}$ to give only blue
		and green descendants. 
        Based on this subdivision of red cubes in $\mathcal{Q_\delta}$ and their red sub-cubes, solely dependent on the way $f$ distorts diameters,
		we build a directed graph $\mathcal{G}=\mathcal{G}(\mathcal{Q_\delta})$. For an integer $m\in [m_1,m_{K_\delta}]$, set $V_m$ to be the collection of all $m$-level cubes intersecting $E$ and contained in $\cup_{i\in I_\delta}Q_i^{k_i}$ that are either red, or are contained in a red cube of level $m-1$. This excludes descendants of blue and green cubes from being included in $V_m$, because those are not subdivided. Then, the vertex set of $\mathcal{G}$ is defined to be the collection of cubes
		\[
		V= \mathcal{Q}_\delta\cup\bigcup_{m=m_1}^{m_{K_\delta}} V_m,
		\]
		and the edges of $\mathcal{G}$ are
        $$
        E = \bigcup_{m=m_1}^{m_{K_\delta}-1} \big\{(Q_t,Q_s):\; Q_t\in \mathcal{D}_{m},Q_s\in \mathcal{D}_{m+1},\; Q_s\subsetneq Q_t,\; \text{and } Q_t \text{ red}\big\}.
        $$
        What we use in the desired covering of $f(E)$ is essentially the images of blue vertices in $V$ and the ``enlarged'' images of green vertices in $V$ (after applying Lemma~\ref{lem: unif perfect L(Ui)}). For an integer $m\in [m_1, m_{K_\delta}]$, we  call the collection of all $m$-level cubes in $V$, i.e. the collection $\mathcal{Q}_\delta \cup V_m$, the $m$-th \textit{row} in $\mathcal{G}$. (See also Figure~\ref{fig: Graph}).
        
        Note that blue and green cubes in $\mathcal{Q_\delta}$ are part of no edge, because there is no need for them to be sub-divided, and they cannot have an ancestor also in $\mathcal{Q}_\delta$ by the respective reduction in the beginning of the proof.
        Let $V_R, V_B^{\text{orig}}, V_B', V_G$ denote the red cubes
		in $V$, blue cubes in $V\cap \mathcal{Q_\delta}$, blue cubes in $V$ that
		do not lie in $\mathcal{Q_\delta}$ (i.e. blue cubes with an ancestor in $V$),
		and green cubes in $V$, respectively.
		Notice that if we have a bound on $\text{card}V_R$, we can also bound $\text{card}V_B'$
		by Remark~\ref{rem: k+1 level inside k}:
		\begin{equation}\label{eq: blue desc}
			\text{card}V_B' \leq N_d\; \text{card}V_R,
		\end{equation}
		because every vertex in $V_B'$ is connected to
		a red ancestor vertex in $V$, and for each red $Q\in V$
		there are at most $N_d$ cubes $\tilde Q\in V$ with $(Q,\tilde Q)\in E$.
		
		Note, by \eqref{eq:1} and \eqref{eq:2}, that
		\[
		\text{card} \mathcal{Q_\delta} \leq c_{u,0} \,\varepsilon\,\delta^{-d/\theta},
		\]
		where $c_{u,0}=(\frac{c_uc_0}{3})^d$. This implies
		\begin{equation}\label{eq:5}
			\text{card} V_G \leq 2\max\Big\{c_{u,0}\,\varepsilon\,\delta^{-d/\theta},\; N_d\; \text{card}V_R\Big\},
		\end{equation}
        where $c_{u,0}\varepsilon\delta^{-d/\theta}$ bounds the number of green cubes in $\mathcal{Q}_\delta$, and $N_d\, \text{card} V_R$ bounds the number of green cubes in $V$ which are connected to some red ancestor vertex in $V$ by an edge, after another application of Remark~\ref{rem: k+1 level inside k}. Let
        $$
        \mathcal{U}=\mathcal{U}_\delta=\{ f(Q): Q\in V_B^{\text{orig}}\cup V_B'\}\cup \{ L(f(Q)): Q\in V_G\},
        $$
        where $L(f(Q))$ is a set as in Lemma~\ref{lem: unif perfect L(Ui)} applied to $f(Q)$  for $M=\delta_Y^{1/\theta}$. By definition of blue and green
		cubes and by Lemma~\ref{lem: unif perfect L(Ui)}, which is applicable due to assuming that $\delta_\varepsilon$ is
		small enough so that $2c_u'^{-1}\delta_Y^{1/\theta}\leq \delta_Y$ for $\delta\leq \de_\varepsilon$ (see last point after \eqref{eq:2}), the collection
		$\mathcal{U}$ is a $\delta_Y^{1/\theta}$-admissible cover of $f(E)$.
		Thus, we need to estimate the sum
		\[
		\mathcal{S}:=\sum_{U\in\mathcal{U}} |U|^{D} =
		\sum_{Q\in V_B^{\text{orig}}} |f(Q)|^{D}
		+ \sum_{Q\in V_B'} |f(Q)|^{D}
		+ \sum_{Q\in V_G} |L(f(Q))|^{D} .
		\]
		But, by definition of blue cubes and \eqref{eq: blue desc},
		\[
		\sum_{Q\in V_B'} |f(Q)|^{D} \leq
		\sum_{Q\in V_B'} \delta_Y^{D} \leq N_d \; \text{card} (V_R )\;\delta_Y^{D},
		\]
		and
		\[
		\sum_{Q\in V_G} |L(f(Q))|^{D} \leq
		2 \max \Big\{ c_{u,0}\,\varepsilon\,\delta^{-d/\theta},\;
		N_d\; \text{card}V_R \Big\}\, \delta_Y^{D/\theta},
		\]
		which by the above imply
		\begin{equation}\label{eq: before Holder and VR}
			\mathcal{S}
			\leq \sum_{Q\in V_B^{\text{orig}}} |f(Q)|^{D}
			+ N_d \,\text{card}(V_R) \, \delta_Y^{D}
			+ 2 \max \Big\{ c_{u,0}\,\varepsilon\,\delta^{-d/\theta},\;
			N_d\, \text{card}V_R \Big\} \delta_Y^{D/\theta} .
		\end{equation}
		
		We estimate $\sum_{Q\in V_B^{\text{orig}}} |f(Q)|^{D}$ using $f\in CH^{p,\alpha}$, the defining properties of
		compactly–Hölder maps, and Hölder’s inequality,
		noting that $p/D=\alpha p/d+1>1$, and
		\[
		\frac{1}{p/D} + \frac{1}{\tfrac{p}{p-D}} = 1 .
		\]
		Namely,
		\begin{align}
        \sum_{Q\in V_B^{\text{orig}}} |f(Q)|^{D}
        &\leq \sum_{Q\in V_B^{\text{orig}}} |f(B(Q))|^{D} \nonumber \\
        &\leq \sum_{Q\in V_B^{\text{orig}}} C_Q^{D} |B(Q)|^{D\alpha} \nonumber \\
        &\lesssim \sum_{Q\in V_B^{\text{orig}}} C_Q^{D} |Q|^{D \alpha} \nonumber \\
        &\leq
        \left(\sum_{Q\in V_B^{\text{orig}}} C_Q^p\right)^{D/p}
        \left(\sum_{Q\in V_B^{\text{orig}}} |Q|^{D \alpha  \tfrac{p}{p-D}}\right)^{\tfrac{p-D}{p}} .
        \label{eq:7}
        \end{align}
		
		The comparability constant above \eqref{eq:7} depends only on the  fixed constants $p, \alpha, d'$ and the uniform constants $c_0, C_0, b$ from Theorem~\ref{thm:Dydadic}. Note that
        $$
        D \alpha  \frac{p}{p-D}= \frac{\alpha p^2d' (\alpha p +d')^{-1}}{p-(\alpha p +d')^{-1}p d'}=d',
        $$ and
		
		\[
		\sum_{Q\in V_B^{\text{orig}}} |Q|^{d'}
		= \sum_{Q\in V_B^{\text{orig}}} \Big(\tfrac{|Q|}{\delta}\Big)^{d'} \delta^{d'}
		\leq \delta^{d'} \sum_{Q\in V_B^{\text{orig}}} \Big(\tfrac{|Q|}{\delta}\Big)^d,
		\]
		due to \eqref{eq:1} and $d'>d$.
		Thus, \eqref{eq:7} yields
		\[
		\sum_{Q\in V_B^{\text{orig}}} |f(Q)|^{D}
		\leq (C_E)^{D/p}\;
		\left(\delta^{\,d'-d}\,\sum_{Q\in V_B^{\text{orig}}} |Q|^d\right)^{(p-D)/p}\lesssim \delta^{\tfrac{(d'-d)(p-D)}{p}}\, \eps^{\tfrac{p-D}{p}},
		\] where the comparability constant similarly only depends on uniformly fixed constants.
		Since $d'>d$ and $p-D=\alpha p^2(\alpha p+d')^{-1}>0$, and $\delta< 1$, the above gives
		\begin{equation}\label{eq:8}
			\sum_{Q\in V_B^{\text{orig}}} |f(Q)|^{D} \leq \varepsilon^{(p-D)/p}.
		\end{equation}
		
		For the remaining terms in \eqref{eq: before Holder and VR}, it is enough to
		bound the number of red vertices of $\mathcal{G}$.		
		Let $M(m)$ denote the number of red vertices in the
		$m$-th row of $\mathcal{G}$.

		Let $\{Q_j^m\}_{j\in J_m}$ be all red $m$-level cubes in $V$.
		If $Q_j^m$ is such a cube, by $f\in CH^{p,\alpha}$ and \eqref{eq: CH-inequality} we have
		\[
		|f(Q_j^m)| \leq |f(B(Q_j^m))| \leq C_{Q_j^m}\,|B(Q_j^m)|^\alpha.
		\]
		
		But by $|f(Q_j^m)|\geq \delta_Y$, and $|B(Q_j^m)|\simeq |Q_j^m|\simeq b^m$, due to Theorem~\ref{thm:Dydadic} (iii) and uniform perfectness of $X$, we get
		\begin{equation}\label{eq:9}
		\delta_Y^p \lesssim C_{Q_j^m}^p \; b^{m\alpha p}.
		\end{equation}

		Applying this to all level $m$ red vertices and summing \eqref{eq:9} over all of them gives
		\[
		M(m) \lesssim \delta_Y^{-p}\sum_{j\in J_m} C_{Q_j^m}^p \, b^{m\alpha p}\leq C_E \,b^{m\alpha p}\,\delta_Y^{-p}.
		\]
		
		Summing the above over all levels $m\geq m_1$ yields
		\[
		\text{card} V_R = \sum_{m= m_1}^{m_{K}} M(m)
		\lesssim C_E \,\delta_Y^{-p}\,\sum_{m\geq m_1} b^{m\alpha p}
		\lesssim \delta_Y^{-p}\,b^{m_1\alpha p}.
		\]
        But by $k_1=m_1$ and \eqref{eq:1} we have
		\[
		\text{card} V_R \lesssim \delta_Y^{-p}\,\delta^{\alpha p}.
		\]
		
		Using the above and \eqref{eq:8} on \eqref{eq: before Holder and VR} yields
		\[
		\mathcal{S}
		\lesssim \varepsilon^{(p-D)/p}
		+ N_d\,\delta_Y^{-p}\,\delta^{\alpha p}\,\delta_Y^{D}
		+ 2\max\left\{ c_{u,0}\,\varepsilon\,\delta^{-d/\theta},\; N_d\,\delta_Y^{-p}\,\delta^{\alpha p} \right\}\,\delta_Y^{D/\theta} .
		\]
		
		By choice of $\delta_Y=\delta^{d/D}$ in \eqref{eq: δ in f(E)}, we have
		\[
		N_d\,\delta_Y^{-p}\,\delta^{\alpha p}\,\delta_Y^{D} = N_d\,\delta^{-dp/D}\,\delta^{\alpha p}\,\delta^{d},
		\]
        and because $-dp/D + \alpha p + d = (1-d/d')\alpha p > 0$,
		and
		\[
		-dp/D + \alpha p + d/\theta = (1-d/d')\alpha p + (1/\theta - 1)d > 0 ,
		\] by $\delta\leq \varepsilon$, we get
		\begin{equation}\label{eq:10}
			\mathcal{S} \lesssim
			\varepsilon^{(p-D)/p}
			+ N_d\,\varepsilon^{(1-d/d')\alpha p}
			+ 2\max\Big\{ c_{u,0}\,\varepsilon,\;
			N_d\,\varepsilon^{(1-d/d')\alpha p+(1/\theta -1)d}\Big\}.
		\end{equation} Note that the comparability constant above, say $C(\lesssim)$, only depends on  uniformly fixed constants, and does not depend on $\delta$ or $\epsilon$.
		
		Thus, for any $\varepsilon_Y>0$, there is $\varepsilon>0$
		small enough such that the right-hand-side of \eqref{eq:10} times  $C(\lesssim)$ is less than $\varepsilon_Y$.
		Fix $\delta_\varepsilon$ small enough for all the above assumptions to hold,
		so that there is $\delta_{\varepsilon_Y}'=\delta_\varepsilon^{d/D'}\in(0,1)$ such that
		for every $\delta_Y=\delta^{d/D}\leq \delta_{\varepsilon_Y}'$ there is a cover $\mathcal{U}$
		of $f(E)$, resulting from the corresponding  graph of the source $\mathcal{G}(\mathcal{Q_\delta})$,
		with $\delta_Y^{1/\theta}\leq |U|\leq \delta$ for all $U\in\mathcal{U}$, and
		\[
		\sum_{U\in\mathcal{U}} |U|^{D} < \varepsilon_Y .
		\]
		This implies $\dim_\theta f(E)\leq D$, and the proof is complete by taking $d'\rightarrow d_E$.
    \end{proof}

\begin{remark}
~
    \begin{enumerate}
        \item [(i)] We emphasize that the approach we took for the sum over blue cubes with ancestors in $V$, i.e. over $V_B'$, would not work on the sum over blue cubes $V_B^{\text{orig}}$ in $\mathcal{Q}_\delta$. That is because the bound on their number from \eqref{eq:1}, \eqref{eq:2}, the bound on the diameters of images by $\delta_Y$, and \eqref{eq: δ in f(E)} would yield 
        $$
        \sum_{Q\in V_B^{\text{orig}}} |f(Q)|^{D}\leq c_{u,0}\varepsilon\delta^{-d/\theta}\delta_Y^D=c_{u,0}\varepsilon\delta^{-d/\theta}\delta^{d},
        $$ with the right-hand side being potentially very large for small $\delta$.

        \item [(ii)] Comparing our terminology to that of Kaufman, red and blue cubes correspond to ``major'' and ``minor'' cubes in \cite{Kaufman}. Due to the nature of the intermediate dimension, requiring a lower bound on the diameters of covering sets, we had to introduce the class of green cubes to account for sets in the target that are not too large, but are \textit{too small}. This is not necessary when investigating the distortion of the upper box-counting dimension, and thus a similar notion was not needed in \cite{Kaufman}. We also note that ``critical'' in \cite{Kaufman} corresponds to ``red cubes with blue or green descendants'' in our language, which is, however, not a necessary notion for our argument.

    \end{enumerate}
\end{remark}

\section{Intermediate dimensions under fractionally smooth Sobolev mappings}\label{sec: Sob are cH}
\subsection{Morrey embedding theorem for fractional Haj\l asz--Sobolev, Haj\l asz--Triebel--Lizorkin, and Haj\l asz--Besov spaces}
\label{subsec:Morrey}
In order to show that continuous mappings with a finite Haj\l asz--Triebel--Lizorkin or Haj\l asz--Besov semi-norm are compactly H\"older, we will employ the Morrey embedding theorem.

We begin by establishing such an embedding theorem for
Haj\l asz--Sobolev spaces $\dot{M}^{s,p}$.
To facilitate the formulation of the result, we introduce the
following piece of notation:\  Given $Q,b\in(0,\infty)$, $\sigma\in[1,\infty)$,
and a ball $B_0\subset X$ of radius $R_0\in(0,\infty)$, the measure $\mu$ is
said to satisfy the \emph{$V(\sigma B_0,Q,b)$
condition}\footnote{This condition is a slight variation of the one in \cite[p.\,197]{hajlasz}.},
provided that, for any $x\in X$ and $r\in(0,\sigma R_0]$
satisfying $B(x,r)\subset\sigma B_0$,
\begin{equation}
\label{measbound}
\mu\left(B(x,r)\right)\geq br^Q.
\end{equation}
Note that both locally $Q$-homogeneous and $Q$-Ahlfors regular measures
satisfy the $V(\sigma B_0,Q,b)$ condition for all balls $B_0$ having sufficiently small measure.

\begin{theorem}
\label{embedding}
Let $(X,d_X,\mu)$ be a metric measure space and $(Y,d_Y)$ be a metric space.	
Let $\sigma\in(1,\infty)$, $B_0\subset X$ be  a 
ball of radius $R_0\in(0,\infty)$, and suppose $s,p,Q\in(0,\infty)$ satisfy $sp>Q$.
Assume that the measure $\mu$
satisfies the  $V(\sigma B_0,Q,b)$ condition for some $b\in(0,\infty)$, and 
suppose  $f\in\dot{M}^{s,p}(\sigma B_0:Y)$. 
Then, there exists
a set $N\subset X$ with $\mu(N)=0$, such that,
for all $x,y\in B_0\setminus N$,
\begin{equation}
\label{eq30}
d_Y(f(x),f(y))\leq C b^{-1/p}[d_X(x,y)]^{s-Q/p}\Vert f\Vert_{\dot{M}^{s,p}(\sigma B_0:Y)},
\end{equation}
where $C\in(0,\infty)$ is a constant depending only on
$s$, $p$, $Q$, and $\sigma$. In particular, if $(Y,d_Y)$
is complete then $f$ has a H\"older continuous representative of order $s-Q/p$ on $B_0$, denoted by $u$, satisfying \eqref{eq30}
for all $x,y\in B_0$.
\end{theorem}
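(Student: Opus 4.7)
The plan is a chaining argument patterned after Haj\l asz's classical Morrey embedding, modified to work in the metric-valued setting where averages of $u$ are unavailable. Fix $g\in\mathcal{D}^s(u)$ with $\|g\|_{L^p(\sigma B_0)}\leq 2\|u\|_{\dot{M}^{s,p}(\sigma B_0:Y)}$, and let $N\subset X$ be the union of $\{g=\infty\}$ with the $\mu$-null exceptional set from the definition of the $s$-gradient, so that $\mu(N)=0$ and \eqref{fracHajlasz} holds for every pair of points in $X\setminus N$. For $x,y\in B_0\setminus N$ and $r:=d_X(x,y)$, I would construct two ``tails'' of good points $\{x_k\}_{k\geq 0}$ and $\{y_k\}_{k\geq 0}$ with $x_k\to x$ and $y_k\to y$, then estimate $d_Y(u(x),u(y))$ by telescoping $d_Y(u(x_k),u(x_{k+1}))$, $d_Y(u(x_0),u(y_0))$, and $d_Y(u(y_k),u(y_{k+1}))$ through \eqref{fracHajlasz}.

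The selection step is driven by the $V(\sigma B_0,Q,b)$ condition. Setting $r_k:=2^{-k}r$, possibly after reducing the initial scale $r_0$ to a fraction of $R_0$ depending only on $\sigma$ so that $B(x,r_k), B(y,r_k)\subset \sigma B_0$ for every $k\geq 0$, define
\[
\lambda_k:=2\bigl(br_k^{\,Q}\bigr)^{-1/p}\|g\|_{L^p(\sigma B_0)}.
\]
Chebyshev's inequality then gives
\[
\mu\bigl(\{z\in B(x,r_k):g(z)>\lambda_k\}\bigr)\leq \lambda_k^{-p}\|g\|_{L^p(\sigma B_0)}^{p} = 2^{-p}br_k^{\,Q}\leq 2^{-p}\mu(B(x,r_k)),
\]
so the complement is non-empty and I may pick $x_k\in B(x,r_k)$ with $g(x_k)\leq\lambda_k$; $y_k$ is picked analogously in $B(y,r_k)$.

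The main estimate then follows from $s>Q/p$. Since $d_X(x_k,x_{k+1})\leq 2r_k$, \eqref{fracHajlasz} yields
\[
d_Y(u(x_k),u(x_{k+1}))\leq (2r_k)^{s}\bigl(\lambda_k+\lambda_{k+1}\bigr)\lesssim b^{-1/p}r_k^{\,s-Q/p}\|g\|_{L^p(\sigma B_0)},
\]
so the telescoping sum for each tail is a geometric series in $r^{s-Q/p}$ with finite total. The bridging term $d_Y(u(x_0),u(y_0))$ is estimated the same way using $d_X(x_0,y_0)\leq 3r$. To pass from $u(x_0)$ to $u(x)$, I apply \eqref{fracHajlasz} to the pair $(x,x_k)$:
\[
d_Y(u(x),u(x_k))\leq r_k^{\,s}\bigl[g(x)+\lambda_k\bigr]\to 0,
\]
since $g(x)<\infty$ as $x\notin N$ and $r_k^{\,s}\lambda_k\simeq r_k^{\,s-Q/p}\to 0$; similarly $u(y_k)\to u(y)$. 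Summing the three pieces and taking the infimum over $g\in\mathcal{D}^s(u)$ produces \eqref{eq30} on $B_0\setminus N$. For the ``in particular'' clause, when $Y$ is complete the single-sided version of this argument applied to an arbitrary $x\in B_0$ shows that $\{u(x_k)\}$ is Cauchy, and its limit is independent of the chosen sequence (by chaining two choices against each other), so it defines the desired $(s-Q/p)$-H\"older representative on all of $B_0$, agreeing with $u$ on $B_0\setminus N$.

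The main technical obstacle I anticipate is the bookkeeping required to ensure that every ball $B(x,r_k), B(y,r_k)$ entering the Chebyshev step lies inside $\sigma B_0$; this is what forces the preliminary reduction on the initial scale $r_0$ and is where the constant $C$ acquires its dependence on $\sigma$. Once the good points $\{x_k\},\{y_k\}$ are in hand, everything else reduces to routine exponent arithmetic driven by $s>Q/p$.
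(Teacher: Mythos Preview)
Your argument is correct and takes a genuinely different route from the paper. The paper follows Haj\l asz's level-set scheme: it fixes a near-optimal $g$, sets $E_k=\{g\leq 2^k\}$, and for any $x\in E_k\cap B_0$ chains \emph{downward} through points $x_{k-1}\in E_{k-1},\ldots,x_{k_0}\in E_{k_0}$, where the radii $r_j\simeq 2^{-jp/Q}b^{-1/Q}\|g\|_{L^p}^{1/Q}$ are dictated by the levels rather than by $d_X(x,y)$. This first produces an \emph{oscillation} bound $d_Y(u(x),u(\xi_0))\lesssim b^{-1/p}R_0^{s-Q/p}\|g\|_{L^p}$ relative to a fixed anchor $\xi_0\in E_{k_0}$; the H\"older estimate is then obtained by re-running the oscillation bound on the smaller ball $B(x,2d_X(x,y))$, via precisely the two-case split on $d_X(x,y)$ versus $(\sigma-1)R_0$ that you flagged as the bookkeeping step. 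Your approach instead chains directly from both endpoints at geometric scales $r_k=2^{-k}d_X(x,y)$, with Chebyshev selecting a good point at each scale, and closes the tails by $r_k^s[g(x)+\lambda_k]\to 0$; this is shorter and bypasses the intermediate oscillation estimate altogether. The trade-off is that the level-set decomposition is what generalizes most cleanly to the Triebel--Lizorkin and Besov settings treated later (where one must track a sequence $\{g_k\}$ rather than a single $g$). One minor quibble: there is no need to ``take the infimum over $g$'' at the end, since the null set $N$ depends on the particular $g$ chosen; the factor of $2$ from $\|g\|_{L^p}\leq 2\|u\|_{\dot{M}^{s,p}}$ is simply absorbed into $C$.
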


While the proof of Theorem~\ref{embedding} is quite similar
to the proof of \cite[Theorem~3.1]{AYY24} and \cite[Theorem~6]{agh20} for $Y=\R$, there were certain adjustments needed to properly establish the result for mappings with an arbitrary metric  space $(Y,d_Y)$ as a target.
\begin{proof}
Choose $g\in D^s(f)\cap L^p(\sigma B_0)$ such that
$\|g\|_{L^p(\sigma B_0)}\approx\Vert f\Vert_{\dot{M}^{s,p}(\sigma B_0:Y)}$.
Without loss of generality, we may assume $\int_{\sigma B_0}g^p\, d\mu>0$.
Indeed, if this integral equals zero,
then $g=0$ $\mu$-almost everywhere in $\sigma B_0$ which, in turn, implies that
$u$ is a constant function $\mu$-almost everywhere in $B_0$ and the result is obvious
in this scenario.

By replacing, if necessary, $g$ with
$\widetilde{g}:=g+ (\mint{-}_{\sigma B_0} g^p\, d\mu )^{1/p}$, we may further assume that
\begin{equation}
\label{eq4}
g(x)\geq 2^{-(1+1/p)}\left(\, \mint{-}_{\sigma B_0} g^p\,  d\mu\right)^{1/p}>0
\
\text{for almost every $x\in\sigma B_0$.}
\end{equation}
Let $N:=E\cup\{x\in\sigma B_0: g(x)=\infty\mbox{ or } g(x)=0\}$, where $E\subset \sigma B_0$ is a set of measure zero such that the pointwise inequality
\eqref{fracHajlasz} holds for every $x,y\in \sigma B_0\setminus E$. Then $N$ is a measurable set satisfying $\mu(N)=0$.

To prove \eqref{eq30}, we will first show that there exists a point $\xi_0\in \sigma B_0\setminus N$  such that, 
for every $x\in B_0\setminus N$,
\begin{equation}
\label{eq29}
d_Y(f(x),f(\xi_0))\leq
Cb^{-1/p}R_0^{s-Q/p}\left(\,\int_{\sigma B_0}g^{p}\, d\mu\right)^{1/p},
\end{equation}
where $C$ is a positive constant depending only on $d$, $s$, $p$, $Q$, and $\sigma$. To this end, for any $k\in\mathbb{Z}$, let
$$
E_k:=\lf\{ x\in\sigma B_0\setminus N:\, g(x)\leq 2^k\r\}.
$$
Clearly  $E_{k-1}\subset E_k$ for any $k\in\mathbb{Z}$. By \eqref{eq4},  we find that
\begin{equation}
\label{eq42}
\bigcup_{k\in\mathbb{Z}} \left[E_k\setminus E_{k-1}\right]=\sigma B_0\setminus N.
\end{equation}
It follows from the pointwise inequality \eqref{fracHajlasz} that $f$ restricted to $E_k$ is $2^{k+1}$-H\"older continuous of order $s$, that is,
\begin{equation}
\label{eq8}
d_Y(f(x),f(y))\leq 2^{k+1}[d_X(x,y)]^s,
\ \forall\,x,y\in E_k.
\end{equation}
Also, applying the Chebyshev inequality, we have
\begin{equation}
\label{eq7}
\mu(\sigma B_0\setminus E_k) = \mu\left(\left\{x\in\sigma B_0:\, g(x)>2^k\right\}\right)\leq 2^{-kp}\int_{\sigma B_0} g^p\, d\mu.
\end{equation}
For any $k\in\mathbb{Z}$ and $\gamma\in Y$, let
\begin{eqnarray}
\label{eq43}
a_k:=\left\{
\begin{array}{ll}
\sup\limits_{E_k\cap B_0} d_Y(f,\gamma)
\quad&
\text{if }\, E_k\cap B_0\neq\emptyset,\\[15pt]
\qquad0
&\text{otherwise.}
\end{array}
\right.
\end{eqnarray}
Clearly, $a_k\leq a_{k+1}$ for any $k\in\mathbb{Z}$.

We will need the following elementary result 
from \cite[Lemma~8]{agh20}.
\begin{lemma}
\label{joasia}
For any $x\in X$ and $r\in(0,\infty)$, if $B(x,r)\subset\sigma B_0$
and $\mu(B(x,r))\geq 2\mu(\sigma B_0\setminus E_k)$ for some $k\in\mathbb{Z}$, then
$$
\mu(B(x,r)\cap E_k)\geq\frac{1}{2}\mu(B(x,r))>0.
$$
\end{lemma}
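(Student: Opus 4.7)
The plan is to prove this by a direct set-theoretic decomposition, leaning on the standing convention (stated in Section~\ref{sec:background}) that every metric measure space in this paper assigns strictly positive, finite values to balls. First, I would write $B(x,r)$ as the disjoint union
$$
B(x,r) = \bigl[B(x,r)\cap E_k\bigr]\;\sqcup\;\bigl[B(x,r)\setminus E_k\bigr],
$$
so that, by additivity of $\mu$,
$$
\mu(B(x,r)) = \mu(B(x,r)\cap E_k) + \mu(B(x,r)\setminus E_k).
$$

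Next, since $B(x,r)\subset\sigma B_0$ by hypothesis, I would observe that $B(x,r)\setminus E_k\subset\sigma B_0\setminus E_k$. Monotonicity of $\mu$, combined with the standing assumption $\mu(B(x,r))\geq 2\mu(\sigma B_0\setminus E_k)$, then yields
$$
\mu(B(x,r)\setminus E_k) \;\leq\; \mu(\sigma B_0\setminus E_k) \;\leq\; \tfrac{1}{2}\mu(B(x,r)).
$$
Substituting this bound into the identity above and rearranging gives
$$
\mu(B(x,r)\cap E_k) \;\geq\; \mu(B(x,r)) - \tfrac{1}{2}\mu(B(x,r)) = \tfrac{1}{2}\mu(B(x,r)).
$$
The strict positivity at the end of the claimed inequality then follows immediately from the convention that balls have strictly positive measure, which guarantees $\tfrac12\mu(B(x,r))>0$.

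There is no real obstacle in this argument; the statement is genuinely elementary, consistent with the authors labelling it as such and citing it rather than reproving it. The only subtlety worth flagging is that the hypothesis $\mu(B(x,r))\geq 2\mu(\sigma B_0\setminus E_k)$ by itself only forces $\mu(B(x,r)\cap E_k)\geq \tfrac12\mu(B(x,r))\geq 0$; the strict lower bound truly relies on the positivity-on-balls convention rather than on the quantitative hypothesis.
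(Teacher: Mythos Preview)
Your argument is correct and is exactly the elementary set-theoretic computation one expects; the paper does not supply its own proof of this lemma but merely cites it from \cite[Lemma~8]{agh20}, so there is nothing further to compare.
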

Moving on, let $k_0$ be the least integer such that
\begin{equation*}
2^{k_0}\geq \left[\frac{2^{1/Q}}{(\sigma-1)(1-2^{-p/Q})}\right]^{Q/p}
\left(bR_0^Q\right)^{-1/p}\left(\, \int_{\sigma B_0} g^p\, d\mu\right)^{1/p}
\end{equation*}
or, equivalently,
\begin{equation}
\label{eq2}
2^{-k_0p/Q}\,\frac{2^{1/Q}b^{-1/Q}}{\left(1-2^{-p/Q}\right)}\,\left(\,\int_{\sigma B_0} g^p\,d\mu\right)^{1/Q}\leq (\sigma-1)R_0.
\end{equation}
Clearly,
\begin{equation}
\label{eq16}
2^{k_0}\approx \left(bR_0^Q\right)^{-1/p}\left(\, \int_{\sigma B_0} g^p\, d\mu\right)^{1/p},
\end{equation}
where the positive equivalence constants depend on $Q$, $p$, $\sigma$, and $d$.
The following lemma is a straightfoward modification
of \cite[Lemma~9]{agh20}.

\begin{lemma}
\label{piotr}
Under the above assumptions, one has  $\mu(E_{k_0})\geq\mu(\sigma B_0)/2$.
\end{lemma}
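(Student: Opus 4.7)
The goal is to show $\mu(E_{k_0}) \geq \mu(\sigma B_0)/2$. Since $E_{k_0}$ is contained in $\sigma B_0$ and $\mu(N) = 0$, this is equivalent to the complementary estimate $\mu(\sigma B_0 \setminus E_{k_0}) \leq \mu(\sigma B_0)/2$. The plan is a direct three-step computation combining the Chebyshev-type bound \eqref{eq7} already recorded above, the defining inequality for $k_0$, and the $V(\sigma B_0, Q, b)$ condition applied to the ball $\sigma B_0$ itself.

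First I would specialize \eqref{eq7} to $k = k_0$ to obtain
$$\mu(\sigma B_0 \setminus E_{k_0}) \leq 2^{-k_0 p} \int_{\sigma B_0} g^p \, d\mu.$$
Next, rearranging the defining inequality for $k_0$ (which gives $2^{k_0 p} \gtrsim (bR_0^Q)^{-1} \int_{\sigma B_0} g^p \, d\mu$ with an explicit constant depending on $\sigma$, $p$, $Q$) yields
$$2^{-k_0 p} \int_{\sigma B_0} g^p \, d\mu \leq \left[\frac{(\sigma-1)(1-2^{-p/Q})}{2^{1/Q}}\right]^Q bR_0^Q.$$
Third, I would invoke the lower bound \eqref{measbound} with $x$ the center of $B_0$ and $r = \sigma R_0$ (which is admissible since $B(x, \sigma R_0) = \sigma B_0 \subset \sigma B_0$ and $\sigma R_0 \leq \sigma R_0$), obtaining $\mu(\sigma B_0) \geq b \sigma^Q R_0^Q$.

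Chaining these three estimates reduces the lemma to verifying the elementary numeric inequality
$$\left[\frac{(\sigma-1)(1-2^{-p/Q})}{2^{1/Q}}\right]^Q \leq \frac{\sigma^Q}{2},$$
or equivalently $(\sigma-1)(1-2^{-p/Q}) \leq \sigma$, which is immediate from $\sigma > 1$ and $2^{-p/Q} > 0$. I do not anticipate any serious obstacle here: the entire argument is a bookkeeping exercise confirming that the seemingly baroque constant appearing in the definition of $k_0$ has been engineered precisely so that the Chebyshev tail bound for $\{g > 2^{k_0}\}$ and the mass lower bound $\mu(\sigma B_0) \geq b(\sigma R_0)^Q$ combine to force the exceptional set to have measure at most half of $\mu(\sigma B_0)$. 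The only place requiring care is checking that the admissible scale range $r \in (0, \sigma R_0]$ in the $V(\sigma B_0, Q, b)$ hypothesis does indeed reach $r = \sigma R_0$, so that the mass lower bound for $\sigma B_0$ itself is available.
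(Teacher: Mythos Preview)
Your proof is correct and uses the same three ingredients as the paper (the Chebyshev estimate \eqref{eq7}, the defining inequality for $k_0$, and the $V(\sigma B_0,Q,b)$ condition), but you organize them differently. The paper argues by contradiction: assuming $\mu(\sigma B_0\setminus E_{k_0})>\mu(\sigma B_0)/2$, it sets $r:=2^{1/Q}b^{-1/Q}[\mu(\sigma B_0\setminus E_{k_0})]^{1/Q}$, checks via \eqref{eq7} and \eqref{eq2} that $r<(\sigma-1)R_0$ so that $B(z_0,r)\subset\sigma B_0$, and then the $V$ condition on this \emph{smaller} ball forces $\mu(\sigma B_0)\geq br^Q=2\mu(\sigma B_0\setminus E_{k_0})>\mu(\sigma B_0)$. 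You instead apply the $V$ condition directly at the endpoint $r=\sigma R_0$ to obtain $\mu(\sigma B_0)\geq b\sigma^Q R_0^Q$ and compare this with the Chebyshev upper bound. Your route is shorter and avoids the contradiction; the paper's route never invokes the $V$ condition at the top scale $r=\sigma R_0$ and would therefore still work if the $V$ hypothesis were only assumed on the open range $r\in(0,\sigma R_0)$.
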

\begin{proof}
Suppose to the contrary that $\mu(E_{k_0})<\mu(\sigma B_0)/2$. Then
\begin{equation}
\label{eq41}
\mu(\sigma B_0\setminus E_{k_0})>\mu(\sigma B_0)/2.
\end{equation}
By \eqref{eq7} and \eqref{eq2}, we find that
\begin{align*}
r:=& \, 2^{1/Q}b^{-1/Q}[\mu(\sigma B_0\setminus E_{k_0})]^{1/Q}\\
\leq& \,2^{1/Q} b^{-1/Q} 2^{-k_0p/Q}\left(\,\int_{\sigma B_0} g^p\, d\mu\right)^{1/Q}\\
\leq& \,(\sigma-1)(1-2^{-p/Q}) R_0<(\sigma-1)R_0.
\end{align*}
Therefore, if $z_0\in X$ is the center of the ball $B_0$, then $B(z_0,r)\subset\sigma B_0$, so the $V(\sigma B_0,Q,b)$ condition and \eqref{eq41} give 
\begin{align*}
\mu(\sigma B_0)\geq \mu(B(z_0,r))
\geq br^Q=2\mu(\sigma B_0\setminus E_{k_0})>\mu(\sigma B_0),
\end{align*}
which is an obvious contradiction. This finishes the proof of Lemma~\ref{piotr}.
\end{proof}

Now we prove that, for any integer  $k>k_0$,
\begin{equation}
\label{eq44}
a_k\lesssim b^{-s/Q}\left(\,\int_{\sigma B_0} g^p\, d\mu\right)^{s/Q}\,
\sum_{j=k_0}^{k-1} 2^{j(1-sp/Q)} + \sup_{E_{k_0}}d_Y(f,\gamma).
\end{equation}
To see this, let $k\in\mathbb{Z}$ satisfy $k>k_0$. Observe that, if $E_k\cap B_0=\emptyset$, then
 $a_k=0$ due to its definition  and \eqref{eq44} is trivially true.
So we only need to consider the case  $E_k\cap B_0\neq\emptyset$ below. In this case, $a_k=\sup_{E_k\cap B_0} d_Y(f,\gamma)$. Now, for any $i\in \{0,1,\ldots,k-k_0-1\}$,  define
$$
r_{k-i}:=2^{1/Q}b^{-1/Q}2^{-[k-(i+1)]p/Q}\left(\,\int_{\sigma B_0}g^p\, d\mu\right)^{1/Q}.
$$
Then,  by \eqref{eq2}, we conclude that
\begin{align}
\label{michal}
&r_k+r_{k-1}+\ldots+r_{k_0+1}\nonumber\\
&\quad=
2^{1/Q}b^{-1/Q}\left(\,\int_{\sigma B_0} g^p\, d\mu\right)^{1/Q}\left(\sum_{i=0}^{k-k_0-1} 2^{-[k-(i+1)]p/Q}\right)\nonumber\\
&\quad<
2^{-k_0 p/Q}\frac{2^{1/Q}b^{-1/Q}}{\lf(1-2^{-p/Q}\r)}\left(\,\int_{\sigma B_0} g^p\, d\mu\right)^{1/Q}
\leq (\sigma-1)R_0.
\end{align}
The importance of \eqref{michal} will reveal itself shortly. Since $E_k\cap B_0\neq\emptyset$, we can choose a point $x_k\in E_k\cap B_0$ arbitrarily. We   now use induction with respect to $i$ to define a sequence $x_{k-i}\in\sigma B_0$, $i\in\{1,\ldots,k-k_0\}$, such that $x_{k-1}\in E_{k-1}\cap B(x_k,r_k)$,
$$
x_{k-2}\in E_{k-2}\cap B(x_{k-1},r_{k-1}),\ \ldots,\
x_{k_0}\in E_{k_0}\cap B(x_{k_0+1},r_{k_0+1}).
$$
To be precise, for $i=1$,  we choose $x_{k-1}$ as follows. First,
by \eqref{michal}, we find that, for any point $y\in B(x_k,r_k)$,
\begin{align*}
d_X(z_0,y)
\leq d_X(z_0,x_k)+d_X(x_k,y)<R_0+r_k<R_0+(\sigma-1)R_0=\sigma R_0,
\end{align*}
which   implies $B(x_k,r_k)\subset \sigma B_0$.
 As such, applying the $V(\sigma B_0,Q,b)$ condition  and  \eqref{eq7}, we conclude that
$$
\mu(B(x_k,r_k))\geq br_k^Q=2\; 2^{-(k-1)p}\int_{\sigma B_0} g^p\, d\mu\geq2\mu(\sigma B_0\setminus E_{k-1}),
$$
which, together with  Lemma~\ref{joasia},
further  implies that $\mu (E_{k-1}\cap B(x_k,r_k))>0$ and hence
we can find a point $x_{k-1}\in E_{k-1}\cap B(x_k,r_k)$.
Clearly $x_{k-1}\in\sigma B_0$.
If $k-k_0=1$, then we are done, so suppose that $k-k_0>1$
and assume that we already selected points
$x_{k-1},\ldots, x_{k-i}$ for some $1\leq i<k-k_0$ satisfying
\begin{equation*}
x_{k-j}\in \sigma B_0\cap E_{k-j}\cap B(x_{k-j+1},r_{k-j+1})
\ \mbox{ for\ any\ $j\in\{1,\ldots,i\}$}.
\end{equation*}
It remains to  select
$$
x_{k-(i+1)}\in \sigma B_0\cap E_{k-(i+1)}\cap B(x_{k-i},r_{k-i}).
$$
By  \eqref{michal}, we find that, for any $y\in B(x_{k-i},r_{k-i})$,
\begin{align*}
d_X(y,x_k)&
\leq d_X(y,x_{k-i})+d(x_{k-i},x_{k-i+1})+\ldots+ d_X(x_{k-1},x_k)\\
&< r_{k-i}+r_{k-i+1}
+\ldots+r_k\leq (\sigma-1)R_0,
\end{align*}
which, together with $x_k\in B_0$, further  implies that
\begin{align*}
d_X(z_0,y)&\leq d_X(z_0,x_k)+d_X(x_k,y)<R_0+(\sigma-1)R_0=\sigma R_0.
\end{align*}
Thus, $B(x_{k-i},r_{k-i})\subset\sigma B_0$, and then the $V(\sigma B_0,Q,b)$ condition  and  \eqref{eq7} imply
\begin{align*}
\mu(B(x_{k-i},r_{k-i}))&\geq br_{k-i}^Q\\
&=2\; 2^{-[k-(i+1)]p}\int_{\sigma B_0} g^p\, d\mu\geq2\mu(\sigma B_0\setminus E_{k-(i+1)}).
\end{align*}
Applying Lemma~\ref{joasia}, we have $\mu(E_{k-(i+1)}\cap B(x_{k-i},r_{k-i}))>0$ and  can find a point
$$
x_{k-(i+1)}\in E_{k-(i+1)}\cap B(x_{k-i},r_{k-i}).
$$
Clearly $x_{k-(i+1)}\in\sigma B_0$. That finishes the inductive argument on the choose
of $\{x_{k_0},\ldots,x_{k-1}\}$.

Notice that, for any $i\in\{0,1,\ldots,k-k_0-1\}$,
\begin{align*}
d_X(x_{k-i},x_{k-(i+1)})<r_{k-i}
&=2^{1/Q}b^{-1/Q}2^{-[k-(i+1)]p/Q}\left(\,\int_{\sigma B_0} g^p\, d\mu\right)^{1/Q}.
\end{align*}

Also, recall that, by \eqref{eq8}, $f$ restricted to $E_{k-i}$ is $2^{k-i+1}$-H\"older continuous of
order $s$. From these and the fact that $x_{k-i},x_{k-(i+1)}\in E_{k-i}$
and $x_{k_0}\in E_{k_0}$, it follows that
\begin{align}
\label{eq45}
d_Y(f(x_k),\gamma)
&\leq
\sum_{i=0}^{k-k_0-1}d_Y(f(x_{k-i}),f(x_{k-(i+1)})) + d_Y(f(x_{k_{0}}),\gamma)\nonumber\\
&\leq
\sum_{i=0}^{k-k_0-1} 2^{k-i+1}[d_X(x_{k-i},x_{k-(i+1)})]^s + \sup_{E_{k_0}}d_Y(f,\gamma)\nonumber\\
&\lesssim 2^{s/Q}b^{-s/Q}\left(\,\int_{\sigma B_0} g^p\, d\mu\right)^{s/Q}\,
\sum_{i=0}^{k-k_0-1} 2^{[k-(i+1)](1-sp/Q)}+\sup_{E_{k_0}}d_Y(f,\gamma)\nonumber\\
&\lesssim b^{-s/Q}\left(\,\int_{\sigma B_0} g^p\, d\mu\right)^{s/Q}\,
\sum_{j=k_0}^{k-1} 2^{j(1-sp/Q)} + \sup_{E_{k_0}}d_Y(f,\gamma).
\end{align}
Since $x_k\in E_k\cap B_0$ was selected arbitrarily, taking the supremum in \eqref{eq45} over all $x_k\in E_k\cap B_0$, we  obtain the desired estimate in \eqref{eq44}.

Observe that, on the one hand,   for any $x,y\in\sigma B_0$, 
\begin{equation}\label{estdia}
|\sigma B_0|\leq  2\sigma R_0.
\end{equation}
On the other hand, by Lemma~\ref{piotr} we have $\mu(E_{k_0})>0$, and so 
we can choose a point $\xi_0\in E_{k_0}\subset \sigma B_0\setminus N$. 
Taking $\gamma:=f(\xi_0)$ and applying the H\"older continuity \eqref{eq8}, gives
\begin{align}\label{eq17}
\sup_{E_{k_0}}d_Y(f,f(\xi_0))
&\leq 2^{k_0+1}|\sigma B_0|^s \leq
2^{k_0+1}\left[ 2\sigma R_0\right]^s\nonumber\\
&\lesssim R_0^s\left(bR_0^Q\right)^{-1/p}\left(\, \int_{\sigma B_0} g^p\, d\mu\right)^{1/p},
\end{align}
where the implicit positive
constant depends only on $Q$, $p$, $\sigma$, $s$, and $d$.

With $\gamma=f(\xi_0)$ as in \eqref{eq17} and $\{a_k\}_{k\in\mathbb{Z}}$
as in \eqref{eq43}, observe that since $2^{1-sp/Q}<1$, by \eqref{eq44},
we find that,  for any integer  $k> k_0$,
\begin{equation}
\label{eq28}
a_k\ls b^{-s/Q}\left(\, \int_{\sigma B_0} g^p\, d\mu\right)^{s/Q} 2^{k_0(1-sp/Q)}
+\sup_{E_{k_0}}d_Y(f,f(\xi_0)).
\end{equation}
However, since, for every $k\leq k_0$,
$$
a_k:=\sup_{E_k\cap B_0}d_Y(f,f(\xi_0))\leq \sup_{E_{k_0}}d_Y(f,f(\xi_0)),
$$ 
we actually have that \eqref{eq28} holds for all $k\in\mathbb{Z}$. From this, \eqref{eq16}, and \eqref{eq17}, we deduce that,  for every $k\in\mathbb{Z}$,
\begin{equation}\label{kysr}
a_k:=\sup_{E_k\cap B_0}d_Y(f,f(\xi_0))\ls b^{-1/p}R_0^{s-Q/p}\left(\, \int_{\sigma B_0}g^p\, d\mu\right)^{1/p}.
\end{equation}
Noticing that  the right-hand side of \eqref{kysr}  is a positive
constant independent of $k$, by \eqref{eq42} and
the definition of $a_k$ in \eqref{eq43}, we conclude that the function
$d_Y\left(f,f(\xi_0)\right)$ is bounded on $B_0\setminus N$ by the right-hand side of \eqref{kysr}
(modulo a positive constant). 
This proves \eqref{eq29}.

Next we show  the  H\"older continuity of $u$ along with the estimate \eqref{eq30}.
To this end, fix $x,y\in B_0\setminus N$. If $2d_X(x,y)\leq (\sigma-1)R_0/\sigma$,   let
$R_1:=2d_X(x,y)$. Clearly,   $x,y\in B_1:=B(x,R_1)$. Moreover,
if $z_0$ denotes the center of $B_0$, then, for any $z\in \sigma B_1$, we have
\begin{align*}
d_X(z_0,z)&\leq d_X(z_0,x)+d_X(x,z)< R_0+\sigma R_1
<R_0+(\sigma-1)R_0=\sigma R_0,
\end{align*}
which implies that $\sigma B_1\subset\sigma B_0$.
From this and the assumption that $\mu$ satisfies the $V(\sigma B_0,Q,b)$ condition,
it follows that $\mu$ also satisfies the $V(\sigma B_1,Q,b)$ condition and therefore,
by the estimate \eqref{eq29} applied to $B_1$ in place of $B_0$, we can find a point
$\xi_1\in\sigma B_1\setminus N$ such that,
\begin{align*}
d_Y(f(x),f(y))&\leq d_Y(f(x),f(\xi_1))+d_Y(f(\xi_1),f(y))\\
&\ls b^{-1/p}R_1^{s-Q/p}\, \left(\, \int_{\sigma B_1}g^p\, d\mu\right)^{1/p}\\
&\approx b^{-1/p}[d_X(x,y)]^{s-Q/p}\left(\, \int_{\sigma B_1} g^p\, d\mu\right)^{1/p}\\
&\ls b^{-1/p}[d_X(x,y)]^{s-Q/p}\left(\, \int_{\sigma B_0} g^p\, d\mu\right)^{1/p}.
\end{align*}
If $2d_X(x,y)>(\sigma-1)R_0/\sigma$, then \eqref{eq29} immediately gives 
\begin{align*}
d_Y(f(x),f(y))
&\leq d_Y(f(x),f(\xi_0))+d_Y(f(\xi_0),f(y))\\
&\ls b^{-1/p}R_0^{s-Q/p}\, \left(\, \int_{\sigma B_0}g^p\, d\mu\right)^{1/p}\\
&\ls b^{-1/p}[d_X(x,y)]^{s-Q/p}\left(\, \int_{\sigma B_0} g^p\, d\mu\right)^{1/p}.
\end{align*}
Since $\|g\|_{L^p(\sigma B_0)}\approx\Vert f\Vert_{\dot{M}^{s,p}(\sigma B_0:Y)}$, these estimates imply that \eqref{eq30} holds for every $x,y\in B_0\setminus N$. Since $B_0\setminus N$ is dense in $B_0$ [here we are using the fact that $\mu$ is positive and finite on balls], we can rely on the completeness of $(Y,d_Y)$ to extend $f$ to a H\"older continuous function of order $s-Q/p$ on $B_0$, that satisfies  \eqref{eq30} for every $x,y\in B_0$. This completes the proof of Theorem~\ref{embedding}.
\end{proof}

\begin{remark}\label{Rmk:continuous}
It is easy to see from the proof of Theorem~\ref{embedding} that if $f:\sigma B_0\to Y$ is a continuous mapping such that $\Vert f\Vert_{\dot{M}^{s,p}(\sigma B_0:Y)}<\infty$ then \eqref{eq30} holds for all $x,y\in B_0$, without needing to assume that $(Y,d_Y)$ is complete.
\end{remark}

We now establish the Morrey embedding theorem for Haj\l asz--Triebel--Lizorkin and Haj\l asz--Besov on locally $Q$-homogeneous metric measure spaces. We will need the following result, which was proven for real-valued functions  in \cite[Proposition~2.4]{AYY24}. The same proof is also valid for metric space-valued functions and therefore we omit the details.
\begin{lemma}
\label{sobequal}
Let $(X,d_Y,\mu)$ be a metric space
equipped with a nonnegative Borel measure $\mu$, $(Y,d_Y)$ be a metric space, and
suppose that $s,p\in(0,\infty)$ and $q\in(0,\infty]$. Then
\begin{enumerate}
\item $\dot{M}^s_{p,q}(X:Y)\hookrightarrow\dot{M}^s_{p,\infty}(X:Y)$;

\item $\dot{M}^s_{p,\infty}(X:Y)=\dot{M}^{s,p}(X:Y)$ as sets, with equal semi-norms;

\item if $q\in(0,p]$, then $\dot{N}^s_{p,q}(X:Y)\hookrightarrow \dot{M}^{s,p}(X:Y)\hookrightarrow\dot{N}^s_{p,\infty}(X:Y)$;

\item for any $\varepsilon\in(0,s)$, there exists a  constant $C\in(0,\infty)$ such that, if $B\subset X$ is a ball with radius $r\in(0,\infty)$, then $\dot{N}^s_{p,q}(B:Y)\hookrightarrow\dot{M}^{\varepsilon,p}(B:Y)$, where $\Vert f\Vert_{\dot{M}^{\varepsilon,p}(B:Y)}\leq Cr^{s-\varepsilon}\Vert f\Vert_{\dot{N}^s_{p,q}(B:Y)}$ for all $f\in \dot{N}^s_{p,q}(B:Y)$.
\end{enumerate}
\end{lemma}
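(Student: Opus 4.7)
My plan is to handle the four items by building $s$-gradients from a given fractional $s$-gradient in three slightly different ways: as a pointwise supremum, as a pointwise $\ell^q$-norm of the sequence, and as a weighted $\ell^1$-type sum with geometric weights. The governing observation is that a fractional $s$-gradient $\{g_k\}_{k\in\mathbb{Z}}$ controls $d_Y(u(x),u(y))/[d_X(x,y)]^s$ only at the single scale $2^{-k-1}\le d_X(x,y)<2^{-k}$, whereas an ordinary $s$-gradient must control it uniformly; hence any pointwise majorant of the $g_k$'s is automatically an $s$-gradient.

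Parts (1) through (3) are direct. For (1), the pointwise bound $\sup_k g_k \le \|\{g_k\}\|_{\ell^q}$ valid for every $q\in(0,\infty]$ gives, after taking $L^p$-norms and passing to the infimum over $\mathbb{D}^s(u)$, $\|u\|_{\dot{M}^s_{p,\infty}}\le \|u\|_{\dot{M}^s_{p,q}}$. For (2), if $\{g_k\}\in\mathbb{D}^s(u)$ then $g:=\sup_k g_k \in \mathcal{D}^s(u)$ with $\|g\|_{L^p}=\|\{g_k\}\|_{L^p(X,\ell^\infty)}$; conversely, if $g\in\mathcal{D}^s(u)$ then the constant sequence $g_k\equiv g$ lies in $\mathbb{D}^s(u)$ with the same $L^p(X,\ell^\infty)$-norm, giving equality of the two semi-norms. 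For (3), the right-hand embedding is again the constant-sequence construction, yielding $\|u\|_{\dot{N}^s_{p,\infty}}\le \|u\|_{\dot{M}^{s,p}}$. For the left-hand embedding, given $\{g_k\}\in\mathbb{D}^s(u)$, I would take $g(x):=\|\{g_k(x)\}\|_{\ell^q}$; then $g(x)+g(y)\ge g_k(x)+g_k(y)$ for every $k$, so $g\in\mathcal{D}^s(u)$, and the required bound $\|g\|_{L^p}\le \|\{g_k\}\|_{\ell^q(L^p)}$ is precisely the Minkowski integral inequality applied to $L^{p/q}$ with exponent $p/q\ge 1$.

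For (4), the only genuinely quantitative statement, I would construct an $\varepsilon$-gradient on $B$ as a weighted sum with geometric weights. Let $k_0$ be the smallest integer with $2^{-k_0}\le 4r$, so that any distinct $x,y\in B$ have $d_X(x,y)\le 2r$, and therefore the unique $k$ with $2^{-k-1}\le d_X(x,y)<2^{-k}$ obeys $k\ge k_0$; on that scale $d_X(x,y)^s\le d_X(x,y)^\varepsilon\cdot 2^{-k(s-\varepsilon)}$. Setting
$$G(x):=\sum_{k\ge k_0}2^{-k(s-\varepsilon)}g_k(x),$$
one has $G(x)+G(y)\ge 2^{-k(s-\varepsilon)}(g_k(x)+g_k(y))$ for every $k\ge k_0$, so $G$ is an $\varepsilon$-gradient of $u$ on $B$. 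The main estimate $\|G\|_{L^p(B)}\le C\, r^{s-\varepsilon}\|\{g_k\}\|_{\ell^q(L^p(B))}$ is then obtained, in the case $p\ge 1$ and $q\in[1,\infty)$, by the $L^p$-triangle inequality followed by H\"older in $k$ with dual exponent $q'$: the tail $\sum_{k\ge k_0}2^{-k(s-\varepsilon)q'}$ converges because $s>\varepsilon$ and equals a multiple of $r^{(s-\varepsilon)q'}$ by the choice of $k_0$. The cases $q=\infty$ and $q\in(0,1)$ are handled using the geometric sum directly and the embedding $\ell^q\hookrightarrow \ell^1$, respectively. The only real nuisance is the quasi-Banach regime $p<1$, where the $L^p$-triangle inequality must be replaced by $\|\sum f_k\|_{L^p}^p\le \sum \|f_k\|_{L^p}^p$; the geometric weights still propagate through this exponent, so the argument goes through with straightforward bookkeeping, and taking the infimum over $\mathbb{D}^s(u)$ yields the claimed norm inequality.
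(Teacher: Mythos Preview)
Your proof is correct. The paper itself omits the argument entirely, citing \cite[Proposition~2.4]{AYY24} for the real-valued case and asserting that the same proof carries over to metric-space-valued targets; your write-up supplies precisely the standard argument that reference contains, so the approaches coincide.
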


\begin{corollary}
\label{DOUBembedding-cpt}
Let $(X,d,\mu)$ be a metric measure space, where $\mu$ is  locally $Q$-homogeneous for some $Q\in(0,\infty)$, and let $(Y,d_Y)$ be any metric space. Suppose $s\in(0,\infty)$, $p\in(Q/s,\infty)$, and $q\in(0,\infty]$,
and assume $f:X\to Y$ is a continuous function. 
Then, for any compact set $K\subset X$, there exist constants $C_K\in[1,\infty)$ and $R_K\in(0,\infty)$, both of which are independent of $f$, such that, for all
balls $B_0:=B(x_0,R_0)$ with $x_0\in K$ and
$R_0\in(0,R_K)$ for which $\Vert f\Vert_{\dot{M}^s_{p,q}(2B_0:Y)}<\infty$, one has
\begin{equation}
\label{eq30-DOUB-cpt}
|f(x)-f(y)|\leq C_K[d(x,y)]^{s-Q/p}\frac{R_0^{Q/p}}
{[\mu(2B_0)]^{1/p}}\,\Vert f\Vert_{\dot{M}^s_{p,q}(2B_0:Y)},
\end{equation}
for all $x,y\in B_0$. The above statement is also
valid with  the Haj\l asz--Triebel--Lizorkin space $\dot{M}^s_{p,q}$ replaced by the Haj\l asz--Besov space $\dot{N}^s_{p,q}$.
\end{corollary}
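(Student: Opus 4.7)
The plan is to reduce this corollary to a direct application of Theorem~\ref{embedding}, after extracting a uniform volume lower bound from local $Q$-homogeneity and using Lemma~\ref{sobequal} to pass from the Triebel--Lizorkin (or Besov) semi-norm to the Haj\l asz--Sobolev one.

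First, given the compact set $K\subset X$, I would fix a compact thickening $K^\ast$ of $K$ (for instance, the closure of a bounded neighborhood, which is compact under the implicit mild hypotheses on $X$) and apply local $Q$-homogeneity \eqref{Qhom} on $K^\ast$. Choosing $R_K>0$ small enough that $2B_0\subset K^\ast$ whenever $B_0=B(x_0,R_0)$ with $x_0\in K$ and $R_0<R_K$, I can guarantee that every ball $B(x,r)\subset 2B_0$ has its center $x\in K^\ast$. Applying \eqref{Qhom} with $x\in K^\ast$, $r_1=r$, and $r_2=2R_0$ (using that $B(x,2R_0)\supset 2B_0$) then yields
\[
\mu(B(x,r))\;\geq\;\frac{\mu(2B_0)}{\tilde{C}_{\text{hom}}(K^\ast)(2R_0)^Q}\,r^{Q},
\]
which is precisely the $V(2B_0,Q,b)$ condition with $b=b_K\mu(2B_0)/(2R_0)^{Q}$, where $b_K:=\tilde{C}_{\text{hom}}(K^\ast)^{-1}$ depends only on $K$. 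Consequently $b^{-1/p}\simeq R_0^{Q/p}/[\mu(2B_0)]^{1/p}$, with comparability constant uniform in $x_0,R_0$.

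With this $V$-condition established, I would apply Theorem~\ref{embedding} with $\sigma=2$ to the restriction $f|_{2B_0}$, which lies in $\dot{M}^{s,p}(2B_0:Y)$ by Lemma~\ref{sobequal}(2). This produces a null set $N\subset 2B_0$ and the estimate
\[
d_Y(f(x),f(y))\leq C\,b^{-1/p}\,[d(x,y)]^{s-Q/p}\,\Vert f\Vert_{\dot{M}^{s,p}(2B_0:Y)},
\qquad x,y\in B_0\setminus N.
\]
Substituting the expression for $b^{-1/p}$ and invoking the continuity of $f$ together with density of $B_0\setminus N$ in $B_0$ (since $\mu$ is strictly positive on balls) extends the bound to all of $B_0$, yielding \eqref{eq30-DOUB-cpt} for $\dot{M}^{s,p}$. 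To pass to $\dot{M}^s_{p,q}$ for arbitrary $q\in(0,\infty]$, I would apply parts (1) and (2) of Lemma~\ref{sobequal}, which give $\Vert f\Vert_{\dot{M}^{s,p}(2B_0:Y)}=\Vert f\Vert_{\dot{M}^s_{p,\infty}(2B_0:Y)}\leq C\Vert f\Vert_{\dot{M}^s_{p,q}(2B_0:Y)}$.

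For the Besov case, the same chain of reductions works verbatim when $q\leq p$ via Lemma~\ref{sobequal}(3). The main obstacle is the regime $q>p$, where (3) is not available and Lemma~\ref{sobequal}(4) only embeds $\dot{N}^s_{p,q}$ into $\dot{M}^{\varepsilon,p}$ for $\varepsilon<s$, which at first glance produces Hölder exponent $\varepsilon-Q/p$ rather than $s-Q/p$. To recover the full exponent, I would run a scale-adapted chaining argument directly on the fractional Besov gradient $\{g_k\}\in\mathbb{D}^s(f)$: for $x,y\in B_0$ with $d(x,y)\sim 2^{-k}$, select a telescoping chain through balls of geometrically shrinking radii (as in the proof of Theorem~\ref{embedding}), and estimate each increment using \eqref{Hajlasz} combined with the $\ell^q(L^p)$ control. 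Summation over scales, together with the volume lower bound from local $Q$-homogeneity, should give back the sharp exponent $s-Q/p$ with the desired constant involving $R_0^{Q/p}/[\mu(2B_0)]^{1/p}$.
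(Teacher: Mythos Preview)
Your Triebel--Lizorkin argument matches the paper's: establish the $V(2B_0,Q,b)$ condition from local $Q$-homogeneity with $b\simeq\mu(2B_0)R_0^{-Q}$, apply Theorem~\ref{embedding} together with Lemma~\ref{sobequal}(1)--(2), and upgrade from ``almost every $x,y$'' to ``all $x,y$'' via continuity. (One minor slip: from $x\in 2B_0$ you only get $2B_0\subset B(x,4R_0)$, not $B(x,2R_0)$; this changes nothing beyond the constant. Your explicit thickening to $K^\ast$ is in fact a bit more careful than the paper, which tacitly applies \eqref{Qhom} at centers that need not lie in $K$.)

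The gap is in the Besov case. You dismiss Lemma~\ref{sobequal}(4) too quickly: the point is that the embedding constant there carries a factor $R_0^{s-\varepsilon}$, namely $\Vert f\Vert_{\dot{M}^{\varepsilon,p}(2B_0:Y)}\leq C\,R_0^{\,s-\varepsilon}\,\Vert f\Vert_{\dot{N}^s_{p,q}(2B_0:Y)}$. The paper exploits this for \emph{all} $q\in(0,\infty]$ simultaneously, with no $q\leq p$ versus $q>p$ split. Concretely, pick $\varepsilon<s$ with $p>Q/\varepsilon$ and apply the oscillation estimate \eqref{eq29} from the proof of Theorem~\ref{embedding} to $f\in\dot{M}^{\varepsilon,p}(2B_0:Y)$, obtaining a bound with factor $b^{-1/p}R_0^{\varepsilon-Q/p}\Vert f\Vert_{\dot{M}^{\varepsilon,p}(2B_0:Y)}$; then Lemma~\ref{sobequal}(4) converts this to $b^{-1/p}R_0^{s-Q/p}\Vert f\Vert_{\dot{N}^s_{p,q}(2B_0:Y)}$. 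Finally, one runs the same rescaling used at the end of Theorem~\ref{embedding} (replace $B_0$ by $B_1=B(x,2d(x,y))$ and use $Q$-homogeneity to compare $R_1^{Q/p}/\mu(2B_1)^{1/p}$ with $R_0^{Q/p}/\mu(2B_0)^{1/p}$), which turns $R_0^{s-Q/p}$ into $[d(x,y)]^{s-Q/p}$ and yields \eqref{eq30-DOUB-cpt}. No new chaining on the fractional gradient $\{g_k\}$ is needed; the route you sketch might be made to work, but it is considerably more effort than simply reading off the $R_0^{s-\varepsilon}$ factor already present in Lemma~\ref{sobequal}(4).
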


\begin{remark}
In Corollary~\ref{DOUBembedding-cpt}, the specific dilation factor $2$ of the ball $B_0$
is not essential and can be replaced by any $\sigma\in(1,\infty)$.    
\end{remark}

\begin{proof}
Fix a compact set $K\subset X$. 
We claim that there exists a constant $\kappa\in(0,\infty)$ such that 
\begin{equation}\label{Qdoub-cpt2}
\kappa\left(\frac{r_1}{r_2}\right)^{Q}\leq\frac{\mu(B(y,r_1))}{\mu(B(x,r_2))},
\end{equation}
for all $x,y\in K$ and  $0<r_1<r_2<\tilde{R}_{\text{hom}}(K)/2$
satisfying $B(y,r_1)\subset B(x,r_2)$, where $\tilde{R}_{\text{hom}}(K)\in(0,\infty)$ is as in the $Q$-homogeneous condition \eqref{Qhom}.
Suppose $x,y\in K$ and $0<r_1<r_2<\tilde{R}_{\text{hom}}(K)/2$
are such that $B(y,r_1)\subset B(x,r_2)$. Since $0<r_1<2r_2<\tilde{R}_{\text{hom}}(K)$, by  \eqref{Qhom} we have
$$
\frac{\mu(B(y,2r_2))}{\mu(B(y,r_1))}\leq \tilde{C}_{\text{hom}}(K) \left(\frac{2r_2}{r_1}\right)^Q,
$$
which, combined with the observation $B(x,r_2)\subset B(y,2r_2)$,
gives
$$
\mu(B(x,r_2))\leq\mu(B(y,2r_2))\leq\tilde{C}_{\text{hom}}(K)
\left(\frac{2r_2}{r_1}\right)^{Q}\mu(B(y,r_1)).
$$
Thus, \eqref{Qdoub-cpt2} holds with $\kappa:=[2^Q\tilde{C}_{\text{hom}}(K)]^{-1}$.

Moving on, let $R_K:=\tilde{R}_{\text{hom}}(K)/4$ and 
$B_0:=B(x_0,R_0)$ a ball with $x_0\in K$ and $R_0\in(0,R_K)$.
Then \eqref{Qdoub-cpt2} applied with $B(x,r_2):=B(x_0,2R_0)=2B_0$ implies that the  measure $\mu$ satisfies the $V(2B_0, Q, b)$ condition
with $b:=\kappa\mu(2B_0)(2R_0)^{-Q}$. 
On the other hand,  it follows from (1) and (2) in Lemma~\ref{sobequal} that $\Vert f\Vert_{\dot{M}^{s,p}(2B_0:Y)}\lesssim \Vert f\Vert_{\dot{M}^{s}_{p,q}(2B_0:Y)}$ which, combined with the assumption that  $\Vert f\Vert_{\dot{M}^s_{p,q}(2B_0:Y)}$ is finite, gives that $\Vert f\Vert_{\dot{M}^{s,p}(2B_0:Y)}$ is also finite. Given these observations, by Theorem~\ref{embedding} and Remark~\ref{Rmk:continuous} we conclude that \eqref{eq30} holds for all $x,y\in B_0$ with $b:=\kappa\mu(2B_0)(2R_0)^{-Q}$.  This, together with $\Vert f\Vert_{\dot{M}^{s,p}(2B_0:Y)}\lesssim \Vert f\Vert_{\dot{M}^{s}_{p,q}(2B_0:Y)}$, completes the proof of \eqref{eq30-DOUB-cpt} for $\dot{M}^s_{p,q}$ spaces.

To obtain \eqref{eq30-DOUB-cpt} for $\dot{N}^s_{p,q}$ spaces, choose $\varepsilon\in(0,s)$ close enough to $s$ so that $p\in(Q/\varepsilon,\infty)$. 
By Lemma~\ref{sobequal}(4), 
$\dot{N}^s_{p,q}(2 B_0:Y)\hookrightarrow\dot{M}^{\varepsilon,p}(2B_0:Y)$
with
\begin{equation}
\label{xq-88-cpt}
\Vert f\Vert_{\dot{M}^{\varepsilon,p}(2B_0:Y)}\leq C R_0^{s-\varepsilon}\Vert f\Vert_{\dot{N}^s_{p,q}(2B_0:Y)},
\end{equation}
for some constant $C\in(0,\infty)$ that is independent of $f$ and the ball $B_0$. Given this and the fact that $p\in(Q/\varepsilon,\infty)$, by arguing as we did in the proof of \eqref{eq29} with the function $f\in\dot{M}^{\varepsilon,p}(\sigma B_0:Y)$,  we conclude that
there exist a set $N\subset X$ (which can be chosen independent of $B_0$) with $\mu(N)=0$  and a point
$\xi_0\in 2 B_0\setminus N$  such that, 
for every $x\in B_0\setminus N$,
\begin{align}
\label{eq29X-cpt}
d_Y(f(x),f(\xi_0))&\leq
b^{-1/p}R_0^{\varepsilon-Q/p}\Vert f\Vert_{\dot{M}^{\varepsilon,p}(2B_0:Y)}\nonumber\\
&\lesssim  b^{-1/p}R_0^{s-Q/p}\Vert f\Vert_{\dot{N}^s_{p,q}(2B_0:Y)}.
\end{align}
where the implicit constant depends only on $d$, $s$, $\varepsilon$, $p$, and $Q$. With \eqref{eq29X-cpt} in hand, an argument analogous to the one used at the end Theorem~\ref{embedding} (keeping in mind that $b:=\kappa\mu(2B_0)(2R_0)^{-Q}$) gives that
the inequality in \eqref{eq30-DOUB-cpt} holds pointwise almost everywhere in $B_0$. Given that $f$ is continuous, we conclude that \eqref{eq30-DOUB-cpt} in fact holds pointwise everywhere in $B_0$. This finishes the proof of Corollary~\ref{DOUBembedding-cpt}.
\end{proof}

\subsection{The proof of Theorem~\ref{thm:FractionalIntDim}}
\label{subsec:proofthem1.3}
To prove Theorem~\ref{thm:FractionalIntDim}, we require one final technical lemma, whose statement relies on the following notion:
Given a metric measure space $(X,d_X,\mu)$ and a threshold $R\in(0,\infty)$,
recall that the \emph{restricted Hardy--Littlewood maximal function}
of $f\in L^1_{\loc}(X)$ is defined by setting, for each $x\in X$,
$$
M_Rf(x):=\sup_{r\in(0,R)}\,\mint{-}_{B(x,r)}|f(y)|\, d\mu(y).
$$
Suppose $\mu$ is locally $Q$-homogeneous for some $Q\in(0,\infty)$
and let $\tilde{R}_{\text{hom}}\in(0,\infty)$ be as in the local $Q$-homogeneity condition \eqref{Qhom}. Then for every non-empty compact set $K\subseteq X$ and $p\in(1,\infty)$,
there exists a constant $C'\in(0,\infty)$ such that, 
\begin{equation}\label{maximalestimate}
\Vert M_Rf\Vert_{L^p(K)}\leq C'\Vert f\Vert_{L^p(K')},
\end{equation}
for all $R\in(0,\tilde{R}_{\text{hom}}(K)/10)$ and $f\in L^p(K')$, 
where $K'\subset X$ is any measurable set that contains every ball centered in $K$ with radius at most $\tilde{R}_{\text{hom}}/10$. Indeed, this
follows from arguing as in the proof of the Maximal Function Theorem, see, for instance, \cite[Chapter~2]{hei:lectures} or \cite[Theorem~3.5.6]{Juha-Jeremy-etc-book}. 

We can now state the lemma alluded to above.
\begin{lemma}
\label{Le:Make_disjoint_integrals}
Let $(X,d_X,\mu)$ be a proper metric measure space, where $\mu$ is locally doubling, and fix $p\in(0,\infty)$, $t\in(0,p)$, and $\tau\in (0,1)$. 
For each compact set $K\subset X$ there is a constant $C_K'\in[1,\infty)$ and a radius $R_K'\in(0,\infty)$  such that for all nonnegative functions $g\in L^p_{\loc}(X)$,
\begin{equation}\label{eq:MaximalFunIneq}
\mint{-}_{B(x,r)}g^t\, d\mu \leq C_K' \mint{-}_{B(x,\tau r)}M_{R_K'}(g^t)\, d\mu,
\end{equation} for all $x\in K$ and $0<r<R_K'$.
\end{lemma}
Lemma~\ref{Le:Make_disjoint_integrals} is a corollary of the Maximal Function Theorem, see \cite[Chapter~2]{hei:lectures} and  
\cite[Lemma~3.3]{BaloghAGMS} for a proof.

We plan on using Corollary~\ref{DOUBembedding-cpt} and Lemma~\ref{Le:Make_disjoint_integrals} to show that $f$ belongs to the appropriate compactly H\"older class, which is enough to achieve \eqref{eq: main TL-Besov q<=p Interm bound} and \eqref{eq:SobBoxdistortion}.

\begin{proof}[Proof of Theorem~\ref{thm:FractionalIntDim}]
Let $E\subset X$ be compact and $\eps\in (0,1)$. Note that the centers of balls that cover $E$ in the definition of compactly H\"older mappings do not necessarily lie in $E$, while it is a requirement for the inequalities in Corollary~\ref{DOUBembedding-cpt} and Lemma~\ref{Le:Make_disjoint_integrals}. Thus, we need to apply these properties to a potentially larger compact set than $E$. Take any point $x_E\in E$. Then a straightforward calculation  shows that
\begin{equation}\label{eq:E in comp_ball}
E\cup\left(\bigcup_{x\in E}B(x,1/4)\right) \subset B(x_E,  |E|+1/2).
\end{equation}  Thus, if $K$ is the closure of $B(x_E,  |E|+1/2)$, and $\mathcal{B}$ is a covering of $E$ by balls of radius at most $1/10$, then all elements of $\mathcal{B}$ lie entirely in $K$, along with their centers.

Our plan is to apply Corollary~\ref{DOUBembedding-cpt} and Lemma~\ref{Le:Make_disjoint_integrals} with the set $K$, which is compact  by virtue of $X$ being proper. With this goal in mind,
consider the radius 
\begin{equation}\label{eq:SobIsCH_radius}
    r_E:= \min\left\{ \frac{R_K}{2}, \frac{R_K'}{2}, \frac{R_\text{hom}(K)}{10},\frac{1}{10} \right\} ,
\end{equation} where $R_K, R_K'$ are as in Corollary~\ref{DOUBembedding-cpt} and Lemma~\ref{Le:Make_disjoint_integrals}, respectively,  and $R_{\text{hom}}(K):=\tilde{R}_{\text{hom}}(K)/3$ with $\tilde{R}_{\text{hom}}(K)$ as in the local $Q$-homogeneity condition \eqref{Qhom} for the compact set $K$.
With a view of applying \eqref{maximalestimate} later in the proof, let $K'\subset X$ be any measurable set that 
contains every ball centered in $K$ with radius at most $2r_E$. Since $X$ is proper, we can choose $K'$ to be compact.
 
Suppose $\{B(x_i,r_i)\}_{i\in I}$ is a  cover of $E$ with $r_i<r_E$ and $B(x_i,\eps r_i)\cap B(x_j,\eps r_j)=\emptyset$ for all distinct $i,j\in I$. We will show that an inequality of the form \eqref{eq: CH-def-inequality} holds. Let us first consider the case when $\Vert f\Vert_{\dot{M}^s_{p,q}(X:Y)}$ is finite. By (1) and (2) in Lemma~\ref{sobequal}, we can assume $\Vert f\Vert_{\dot{M}^{s,p}(X:Y)}$ is finite. Then $\Vert f\Vert_{\dot{M}^{s,p}(K':Y)}$ is finite and so there exists $g\in D^s(f)\cap L^p(K')$. 
Fix $t\in(Q/s,p)$. 
By H\"older's inequality and the fact that each $2B_i:=B(x_i,2r_i)\subset K'$, we have 
$\Vert f\Vert_{\dot{M}^{t,p}(2B_i:Y)}$ is finite for each fixed $B_i$, where the pointwise restriction of $g:K'\to[0,\infty]$ to $2B_i$ serves as an $s$-gradient of $f$.
By this, \eqref{eq:SobIsCH_radius}, and Corollary~\ref{DOUBembedding-cpt}, we have that there exists a constant $C_K\in[1,\infty)$, which is independent of $f$, such that, for all balls $B_i:=B(x_i,r_i)$, the following inequality holds
$$
d_Y(f(x),f(y))\leq C_K |B_i|^{Q/t}[d_X(x,y)]^{s-Q/t} \left( \mint{-}_{2B_i} g^t \, d\mu \right)^{1/t},
$$ for all $x,y\in B_i$. Setting $\alpha:=s-Q/t$, we deduce from the preceding inequality and the definition of $|f|_{\alpha,B_i}$,  that
$$
|f|_{\alpha,B_i}^t\leq C_K^t |B_i|^{Q}\mint{-}_{2B_i} g^t\, d\mu.
$$

Note that, by \eqref{eq:lower-AR}, we could uniformly bound the term $ \frac{|B_i|^{Q}}{\mu(2B_i)}$ on the right from above, and just leave the integral terms to depend on $i$. However, setting $C_i$ to be the uniform constant times the integral on the right hand side of the above inequality would not be enough to prove the compactly H\"older property. The reason for this is the potentially large overlap between the balls $2B_i$, contradicting any upper bound $C_E$ on the $t$-sum of the constants $C_i$. To avoid this issue, we apply Lemma~\ref{Le:Make_disjoint_integrals} with $\tau:=\eps/2$ on the integral $\mint{-}_{2B_i} g^t\, d\mu$ (which is possible due to the choice \eqref{eq:SobIsCH_radius} and  $r_i<r_E$) to obtain
$$
|f|_{\alpha,B_i}^t\leq C_K^t C_K' |B_i|^{Q}\mint{-}_{\eps B_i} \tilde{g}\, d\mu,
$$ 
where $\tilde{g}:=M_{r_E}(g^t)\in L^{p/t}(K)\subset L^1(K)$ by \eqref{maximalestimate}. Here we view $g$ as a locally $p$-integrable function defined on $X$ by extending it by zero outside its original domain.
Since the measure $\mu$ is locally $Q$-homogeneous, by \eqref{eq:lower-AR} and \eqref{eq:SobIsCH_radius}, the quotient $\frac{|B_i|^Q}{\mu(\eps B_i)}$ is bounded from above by $\tilde{C}:=C_{\text{hom}}(K) \left(\frac{1}{\eps}\right)^Q$. Hence, 
due to $B(x_i,\eps r_i)\cap B(x_j,\eps r_j)=\emptyset$ and 
$$
\sum_{i\in I} \int_{\eps B_i} \tilde{g}\, d\mu =  \int_{\bigcup\limits_{i\in I} \eps B_i} \tilde{g}\, d\mu \leq  \int_{K} \tilde{g}\, d\mu<\infty,
$$ 
there is a constant $C_E:=C_K^t C_K' \tilde{C} \left( \int_{K} \tilde{g}\, d\mu \right)<\infty$ satisfying
$$
\sum\limits_{i\in I} |f|_{\alpha,B_i}^t \leq \sum\limits_{i\in I} C_K^q C_K' \tilde{C}  \int_{\eps B_i} \tilde{g}\, d\mu  \leq C_E.
$$
Since $E$ and $\eps$ were arbitrary, and given that $K$ depends only on $E$, we deduce that $f\in CH^{t,s-Q/t}$ for all $t\in (Q/s,p)$ as wanted. As such, it follows from Theorem~\ref{thm:main_Holder} that
$$
\dim_\theta f(E) \leq\frac{t d_E(\theta)}{st-Q+d_E(\theta)},
$$ for all $t\in (Q/s,p)$, which, by letting $t\rightarrow p$, gives \eqref{eq: main TL-Besov q<=p Interm bound}.

Suppose that $\Vert f\Vert_{\dot{N}^s_{p,q}(X:Y)}$ is finite. Note that $\Vert f\Vert_{\dot{N}^s_{p,q}(K':Y)}$ is finite and so there exists $\vec{g}:=\{g_k\}_{k\in\mathbb{Z}}\in\mathbb{D}^s(f)\cap\ell^q(L^p(K'))$.
By Lemma~\ref{sobequal}(3), if  $q\in(0,p]$, then $\dot{N}^s_{p,q}(K':Y)\hookrightarrow \dot{M}^{s,p}(K':Y)$ and the desired conclusions follow from what we have already proven. Thus, suppose $q>p$ and fix $t\in(Q/s,p)$.
Similar to as before, we have that  $\Vert f\Vert_{\dot{N}^s_{t,q}(2B_i:Y)}$ is finite  for each fixed $B_i$, where the pointwise restriction of the functions in the sequence $\{g_k\}_{k\in\mathbb{Z}}$ to $2B_i$ serves as a fractional $s$-gradient of $f$. Again, we view each $g_k:K'\to[0,\infty]$ as a locally $p$-integrable function defined on $X$ by extending it by zero outside its original domain.
By this, \eqref{eq:SobIsCH_radius}, and Corollary~\ref{DOUBembedding-cpt},  we have that there exists a constant $C_K\in[1,\infty)$, which is independent of $f$, such that, for all balls $B_i$, the following inequality holds
$$
d_Y(f(x),f(y))\leq C_K |B_i|^{Q/t}[d_X(x,y)]^{s-Q/t} \left(\sum_{k\in\mathbb{Z}}\left( \mint{-}_{2B_i} g_k^t\, d\mu \right)^{q/t}\right)^{1/q},
$$ for all $x,y\in B_i$. Arguing as in the case when $\Vert f\Vert_{\dot{M}^s_{p,q}(X:Y)}$ is finite, we have 
 $$
|f|_{\alpha,B_i}^q\leq C_K^q (C_K'\tilde{C})^{q/t} \sum_{k\in\mathbb{Z}}\left( \int_{\eps B_i} \tilde{g}_k\, d\mu \right)^{q/t},
$$ 
where $\alpha:=s-Q/t$, $\tilde{g}_k:=M_{r_E}(g_k^t)\in L^{p/t}(K)\subset L^1(K)$, and $\tilde{C}:=C_{\text{hom}}(K) \left(\frac{1}{\eps}\right)^Q$ is the same as before.
Since $q>p>t$, we can estimate
\begin{align}\label{syr}
\sum_{i\in I} \sum_{k\in\mathbb{Z}}\left( \int_{\eps B_i} \tilde{g}_k\, d\mu \right)^{q/t}
&=\sum_{k\in\mathbb{Z}}\sum_{i\in I} \left( \int_{\eps B_i} \tilde{g}_k\, d\mu \right)^{q/t}
\leq\sum_{k\in\mathbb{Z}}\left( \sum_{i\in I} \int_{\eps B_i} \tilde{g}_k\, d\mu \right)^{q/t}\nonumber\\
&=
\sum_{k\in\mathbb{Z}}\left(\int_{\bigcup\limits_{i\in I} \eps B_i} \tilde{g}_k\, d\mu\right)^{q/t}
\leq \sum_{k\in\mathbb{Z}}\left(\int_{K} \tilde{g}_k\, d\mu\right)^{q/t}.
\end{align}
On the other hand, by H\"older's inequality (used with $p/t>1$) and \eqref{maximalestimate}, we have
$$
\int_{K} \tilde{g}_k\, d\mu\leq[\mu(K)]^{1-t/p}\left(\int_{K} (\tilde{g}_k)^{p/t}\, d\mu\right)^{t/p}\leq C'
[\mu(K)]^{1-t/p}\left(\int_{K'} g_k^{p}\, d\mu\right)^{t/p},
$$
for some constant $C'\in(0,\infty)$ that only depends on the doubling constant $\tilde{C}_{\text{hom}}$ from \eqref{Qhom}.
This, combined with \eqref{syr} and the fact that 
$$
\sum_{k\in\mathbb{Z}}\left(\int_{K'} g_k^{p}\,d\mu\right)^{q/p}=\Vert\vec{g}\Vert_{\ell^q(L^p(K'))}^q<\infty,
$$
gives
$$
\sum_{i\in I} \sum_{k\in\mathbb{Z}}\left( \int_{\eps B_i} \tilde{g}_k\, d\mu \right)^{q/t}\leq (C')^{q/t}[\mu(K)]^{q(1-t/p)/t}\sum_{k\in\mathbb{Z}}\left(\int_{K'} g_k^{p}\,d\mu\right)^{q/p}<\infty.
$$
Thus, for $C_E:=C_K^q (C_K'\tilde{C}C')^{q/t}[\mu(K)]^{q(1-t/p)/t}\sum_{k\in\mathbb{Z}}\left(\int_{K'} g_k^{p}\,d\mu\right)^{q/p}<\infty$, we have 
$$
\sum\limits_{i\in I} |f|_{\alpha,B_i}^q \leq 
C_K^q (C_K'\tilde{C})^{q/t} 
\sum_{i\in I} \sum_{k\in\mathbb{Z}}\left( \int_{\eps B_i} \tilde{g}_k\,d\mu\right)^{q/t}  \leq C_E,
$$
which implies $f\in CH^{q,s-Q/t}$ for all $t\in (Q/s,p)$. As such, it follows from Theorem~\ref{thm:main_Holder} that
$$
\dim_\theta f(E) \leq\frac{q d_E(\theta)}{(s-Q/t)q+d_E(\theta)}
$$ for all $t\in (Q/s,p)$, which, by letting for $t\rightarrow p$, implies 
$$
\dim_\theta f(E) \leq\frac{q d_E(\theta)}{(s-Q/p)q+d_E(\theta)}.
$$
This completes the proof of Theorem~\ref{thm:FractionalIntDim}.
\end{proof}

\begin{remark}\label{rem: local true}
    A closer study of the proof of Theorem~\ref{thm:FractionalIntDim} above reveals that essentially the finiteness of the respective semi-norms was only utilized when restricted to the compact set $K$. Thus, both the inclusion in the compactly H\"older class and the dimension bounds are in fact true even for continuous mappings in the local Haj\l{}asz--Triebel--Lizorkin and the local Haj\l{}asz--Besov classes.
\end{remark}
    
\section{Hausdorff and Minkowski dimension distortion}\label{sec: dimH et al}
    We first note that $X$ supporting a $Q$-Poincar\'e inequality implies that $X$ is connected \cite[Proposition~8.1.6]{Juha-Jeremy-etc-book} and, thus, uniformly perfect. As a result, Corollary~\ref{cor:QS_Dim} (i), (ii) are direct implications of \cite[Theorem~1.2]{Chron_comp_holder_Minkowski} and \cite[Theorem~9.3]{HeinKoskQCbegin}, respectively (see Remark~\ref{rem: old def}). See also the proof of \cite[Corollary~1.3]{Chron_comp_holder_Minkowski} for more details.
    
    Suppose $Q>1$ and $(X,d,\mu)$ is a proper, $Q$-homogeneous metric measure space, and $(Y,d_Y)$ is arbitrary.

    \begin{proof}[Proof of Theorem~\ref{thm: dimH dimB}]
        (i): Let $E\subset X$ non-empty with $d_E=\dim_H E<Q$. Due to Theorem~\ref{thm:FractionalIntDim}, it is enough to prove how a $(p,\alpha)$-compactly H\"older mapping $f:X\rightarrow Y$ distorts the Hausdorff dimension of $E$.

        Let $d>d_E$ and $D:=\tfrac{pd}{\alpha p+d}$. By the formulation of the Hausdorff dimension using the dyadic cube systems of Theorem~\ref{thm:Dydadic} (see \cite[Theorem~1.1 (i)]{chron_metric_dyadic_dim}), we have that for every $\varepsilon>0$ and every $\delta\in (0,\eps)$ there is a cover of $E$ by dyadic cubes $\{Q_i^{k_i}\}_{i\in I}$, with $b^{k_i}\leq \delta$ for all $i\in I$, such that
        \begin{equation}\label{eq: dH cover_sum}
            \sum_{i\in I} |Q_i^{k_i}| < \varepsilon.
        \end{equation} By Theorem~\ref{thm:Dydadic} (iii) and \eqref{eq: CH-inequality}, we have
        $$
        |f(Q_i^{k_i})|\leq |f(B(Q_i^{k_i}))|\leq C_i|B(Q_i^{k_i})|^\alpha\lesssim C_ib^{k_i \alpha}.
        $$ This implies that
        \begin{equation}\label{eq: d_H images small}
            |f(Q_i^{k_i})|\lesssim C_E \delta^\alpha,
        \end{equation} and that
        $$
        \sum_{i\in I}|f(Q_i^{k_i})|^{D}\lesssim \sum_{i\in I}C_i^Db^{Dk_i \alpha}.
        $$ By an application of H\"older's inequality for $p/D>1$, and noting that $D\alpha p(p-D)^{-1}=d$, similarly to the proof of Theorem~\ref{thm:main_Holder}, we have
        $$
        \sum_{i\in I}|f(Q_i^{k_i})|^{D}\lesssim C_E^{D/p} \left(\sum_{i\in I} b^{k_i d}\right)^{\tfrac{D-p}{p}}\lesssim \left(\sum_{i\in I} |Q_i^{k_i}|^d\right)^{\tfrac{D-p}{p}}.
        $$ By \eqref{eq: dH cover_sum} and \eqref{eq: d_H images small}, the above relation implies that $\{f(Q_i^{k_i})\}_{i\in I}$ is an appropriate cover of $f(E)$ to yield $\dim_H f(E)\leq D$. Letting $d\rightarrow d_E$ finishes the proof. 

        (ii): This follows directly by \cite[Theorem~1.1]{Chron_comp_holder_Minkowski} and Theorem~\ref{thm:FractionalIntDim}. 
    \end{proof}

    \begin{remark}
        It should be noted that in the case where $X,Y$ are uniformly perfect, letting $\theta\rightarrow 1$ in \eqref{eq:CHBoxdistortion} yields Theorem~\ref{thm: dimH dimB} (ii), without the use of \cite[Theorem~1.1]{Chron_comp_holder_Minkowski}. For Theorem~\ref{thm: dimH dimB} (i), although the proof is a very simplified case of that of Theorem~\ref{thm:main_Holder} for $\theta=0$,  simply letting $\theta\rightarrow 0$ in \eqref{eq:CHBoxdistortion} does not generally yield the desired inequality. This is due to the intermediate dimensions not necessarily being continuous at $\theta=0$ (see for instance \cite{Intermediate_dim_introduction}).
    \end{remark}


\end{document}